\newtheorem{thmx}{Theorem}
\newtheorem{theorem}{Theorem}[section]
\newtheorem{corollary}[theorem]{Corollary}
\newtheorem{lemma}[theorem]{Lemma}
\theoremstyle{definition}
\newtheorem{definition}[theorem]{Definition}
\newtheorem{remark}[theorem]{Remark}
\newtheorem{problem}[theorem]{Problem}
\newcommand*{\LP}{\operatorname{LP}}
\newcommand*{\lp}{\ell p}
\newcommand*{\chen}[1]{\textcolor{cyan}{#1}}
\newcommand*{\alessio}[1]{\textcolor{violet}{#1}}
\title{The leading coefficient of Lascoux polynomials}
\author[A. Borz\`{i}]{Alessio Borz\`{i}}
\address{Mathematics Institute, University of Warwick, United Kingdom}
\email{Alessio.Borzi@warwick.ac.uk}
\author[X. Chen]{Xiangying Chen}
\address{Institute of Algebra and Geometry, Otto von Guericke Universit\"at Magdeburg, Magdeburg, Germany}
\email{xiangying.chen@ovgu.de}
\author[H. J. Motwani]{Harshit J. Motwani}
\address{Department of Mathematics: Algebra and Geometry, Ghent University, 9000 Gent, Belgium} 
\email{harshitjitendra.motwani@ugent.be}
\author[L. Venturello]{Lorenzo Venturello}
\address{Department of Mathematics, KTH Royal Institute of Technology, Stockholm, Sweden}
\email{lven@kth.se}
\author[M. Vodi\v{c}ka]{Martin Vodi\v{c}ka}
\address{Max Planck Institute for Mathematics in the Sciences, Leipzig, Germany}
\email{vodicka@mis.mpg.de}
\begin{document}
	
	\maketitle
	
	\begin{abstract}
		Lascoux polynomials have been recently introduced to prove polynomiality of the maximum-likelihood degree of linear concentration models. We find the leading coefficient of the Lascoux polynomials (type C) and their generalizations to the case of general matrices (type A) and skew symmetric matrices (type D). In particular, we determine the degrees of such polynomials. As an application, we find the degree of the polynomial $\delta(m,n,n-s)$ of the algebraic degree of semidefinite programming, and when $s=1$ we find its leading coefficient for types C, A and D.
	\end{abstract}
	
	{\hypersetup{linkcolor=black}
	}
	
	\section{Introduction}
	In statistics, a multivariate Gaussian distribution is an important family of parametric statistical models, whose parameters are given by a mean vector $\mu \in \mathbb{R}^n$ and covariance matrix $\Sigma$ which is positive definite. The inverse $\Sigma^{-1}$ is called the \emph{concentration matrix}. 
	The problems studied in this paper are motivated by \emph{linear concentration models}, introduced by Anderson \cite{Anderson1970estimate}. In these models, the concentration matrix $\Sigma^{-1}$ is assumed to be in a $d$-dimensional linear subspace $\mathcal{L}$ of symmetric matrices, in particular $\Sigma$ should belong to the set $\mathcal{L}^{-1}$ of the inverse matrices of $\mathcal{L}$.
	
	An important invariant that measures the complexity of a linear concentration model is the \emph{maximum likelihood degree} (ML-degree), which is the number of critical points of the rational score equations coming from generic data points. If the linear space $\mathcal{L}$ is generic, the ML-degree is the degree of the Zariski closure of $\mathcal{L}^{-1}$ (see  \cite[Theorem 1]{sturmfels2010multivariate} or \cite[Corollary 2.6]{michalek2021cstaractions}). In this case, the ML-degree depends just on the size $n$ of the symmetric matrices and the dimension $d$ of $\mathcal{L}$, and it will be denoted by $\phi(n,d)$.

	In \cite{michalek2021cstaractions} a new connection of the ML-degree with enumerative geometry was found. This allowed new techniques and tools to study the ML-degree. For instance, $\phi(n,d)$ can be defined in pure enumerative terms, as being the number of nondegenerate quadrics in $n$ variables, passing through $\binom{n+1}{2}-d$ general points and tangent to $d-1$ general hyperplanes. Such problems can be solved by performing computations in the cohomology ring of the variety of \emph{complete quadrics}. In light of this connection, later in \cite{manivel2020complete} the following polynomiality result, previously conjectured by Sturmfels and Uhler \cite[p. 611]{sturmfels2010multivariate} (see also \cite[Conjecture 2.8]{michalek2021cstaractions}) was settled:
	
	\begin{thmx}\cite[Theorem 1.3]{manivel2020complete}
		For any $d > 0$ fixed, the function $n \mapsto \phi(n,d)$ is polynomial.
	\end{thmx}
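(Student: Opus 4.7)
The plan is to reinterpret $\phi(n,d)$ as a top intersection number on the variety of complete quadrics $CQ_n$ and to exhibit a formula for it as a finite sum, indexed by a combinatorial object whose size depends only on $d$, of terms that are individually polynomial in $n$.

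Using the enumerative description recalled in the introduction, $\phi(n,d)$ equals the intersection number $H_1^{\binom{n+1}{2}-d} \cdot H_n^{d-1}$ on $CQ_n$, where $H_1$ is the class of quadrics through a general point and $H_n$ is the class of quadrics tangent to a general hyperplane; the total degree $\binom{n+1}{2}-1$ matches $\dim CQ_n$. Since $d$ is fixed, only the factor $H_n^{d-1}$ has a degree independent of $n$, while $H_1^{\binom{n+1}{2}-d}$ is a large power of a very ample class.

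The second step is to expand $H_n^{d-1}$ in a suitable basis of the Chow ring. Realising $CQ_n$ as an iterated blow-up of $\mathbb{P}(\operatorname{Sym}^2 \mathbb{C}^n)$ along the loci of degenerate quadrics, one obtains a presentation of its Chow ring in which the relevant classes are indexed by tuples of partitions of total size at most $d-1$, so the index set depends only on $d$. After applying the projection formula, each resulting term reduces to a volume-like integral on a product of Grassmannian- or flag-type varieties which, by the Weyl dimension formula, evaluates to a product of linear factors in $n$, hence to a polynomial in $n$.

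The main obstacle is to organise this bookkeeping so that both the indexing combinatorics and the structure constants stabilise as $n$ varies; a priori, both the number of boundary divisors of $CQ_n$ and the Chow ring itself depend on $n$. This is exactly the role of the Lascoux polynomials studied in the remainder of the paper: they package these intersection numbers into a symmetric function whose shape is independent of $n$, and whose appropriate principal specialization recovers $\phi(n,d)$. Polynomiality in $n$ of such a specialization of a fixed symmetric function is then standard (in the spirit of the hook-content formula), so the theorem follows once this generating-function identity is in place.
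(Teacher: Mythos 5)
Your geometric setup is consistent with how this result is actually obtained: $\phi(n,d)$ is the intersection number $L^{\binom{n+1}{2}-d}\cdot H^{d-1}$ on the variety of complete quadrics, and expanding $H^{d-1}$ yields a finite sum, over sets $I$ with $\sum\lambda(I)=d-1$ (an index set that stabilises once $n$ is large), of $n$-independent coefficients times the Lascoux polynomials $\LP_I(n)=\psi_{[n]\setminus I}$. The present paper does not reprove this reduction; it cites \cite{manivel2020complete} for it and concentrates on the resulting functions $\LP_I$.

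The genuine gap is in your final step. You assert that polynomiality of each term is ``standard (in the spirit of the hook-content formula)'' because it is a principal specialization of a fixed symmetric function. It is not. The quantity $\LP_I(n)=\psi_{[n]\setminus I}$ is a \emph{structure constant}: the coefficient of $s_{\lambda([n]\setminus I)}(x_1,\dots,x_n)$ in the Schur expansion of $s_{(d,0,\dots,0)}(\{x_i+x_j\})$, equivalently a sum of minors of the Pascal triangle matrix with row set $[n]\setminus I$. Both the number of variables and the cardinality of the indexing set grow with $n$, so no evaluation or specialization of a fixed symmetric function is taking place, and Weyl-dimension or hook-content formulas give nothing here. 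Proving that $n\mapsto\psi_{[n]\setminus I}$ is a polynomial is precisely the content of \cite[Theorem 4.3]{manivel2020complete} and is the hard part of the theorem: it requires either the recurrence relations \eqref{eq1}--\eqref{eq2} (the route this paper pushes further to extract degrees and leading coefficients) or a careful analysis of the Pascal-matrix minors. Until you supply an argument for that step, your outline establishes only the (known) reduction to Lascoux polynomials, not the polynomiality itself.
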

	The proof of the previous theorem boils down to show the polynomiality of certain functions \cite[Theorem 4.3]{manivel2020complete}, called \emph{Lascoux polynomials}, after Alain Lascoux \cite{lascoux1989giambelli}. There are several equivalent ways to define Lascoux polynomials. For instance, in \Cref{sec:C} we will give a definition in terms of Schur polynomials. Here we describe Lascoux polynomials in a more elementary manner. First, consider the infinite Pascal triangle matrix
	\[ E = \begin{pmatrix}
		1 & 0 & 0 & 0 & \dots \\
		1 & 1 & 0 & 0 & \dots \\
		1 & 2 & 1 & 0 & \dots \\
		1 & 3 & 3 & 1 & \dots \\
		\vdots & \vdots & \vdots & \vdots & \ddots
	\end{pmatrix} \]
	where $E_{ij} = \binom{i}{j}$. 
	For every pair of finite subsets $I,J \subseteq \mathbb{N}$, let $E_{I,J}$ be the submatrix of $E$ with rows indexed by $I$ and columns indexed by $J$. The \emph{Lascoux coefficient} $\psi_I$ of a finite subset $I \subseteq \mathbb{N}$ of cardinality $r$, is defined by
	\[ \psi_I = \sum_{J \subseteq \mathbb{N}, \, |J| = r} \det (E_{I,J}). \]
	Observe that the sum above has only finitely many non-zero terms.
	For every nonnegative integer $n \geq 0$, let $[n] = \{ 0,1,\dots,n-1 \}$. The Lascoux polynomial of a finite subset $I \subseteq \mathbb{N}$, is the function
	\[ \LP_I(n) = \begin{cases}
		\psi_{[n] \setminus I} & \text{if } I \subseteq [n], \\
		0 & \text{otherwise.}
	\end{cases} \]
	
	Two proofs of the polynomiality of $\LP_I$ were provided in \cite{manivel2020complete}. The first uses two recursive formulas of the Lascoux polynomials. In the second, the authors dive in to the properties of the minors of the Pascal triangle matrix. Although the techniques used in the second proof are longer and more technical, they allow to find the degree and leading coefficient of the Lascoux polynomials:
	
	\begin{thmx}\cite[Theorem 4.12]{manivel2020complete} \normalfont{(Type C, \Cref{thm:leading-coefficient-C})}
		Let $I = \{ i_1 < \dots < i_r \} \subseteq \mathbb{N}$. The polynomial $\LP_I$ has degree $\sum_k i_k + |I|$. Its leading coefficient is equal to
		\[
		\frac{\prod_{j>k}(i_j-i_k)}{(i_1+1)!\cdots(i_r+1)!\prod_{j>k}(i_j+i_k+2)}.
		\]
	\end{thmx}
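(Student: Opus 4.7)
The plan is to identify $\psi_K$ with a Schur polynomial evaluation via the Jacobi bialternant, then substitute $K = [n]\setminus I$ and extract the leading term in $n$ from a resulting Weyl-dimension product formula.

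\textbf{Step 1 (generating function for the minors).} Using $\binom{k}{j} = [z^j](1+z)^k$, a direct multilinear expansion gives
\[
\det\bigl((1+z_b)^{k_a}\bigr)_{a,b}\;=\;\sum_{j_1,\dots,j_r \geq 0}\prod_b z_b^{j_b}\, \det\!\bigl(\binom{k_a}{j_b}\bigr),
\]
and grouping the inner sum by the sorted set $J = \{j_1 < \dots < j_r\}$, using the alternating property of the determinant to package all orderings of $J$, yields
\[
\det\bigl((1+z_b)^{k_a}\bigr)_{a,b}\;=\;\sum_{J}\det(E_{K,J})\,\det\!\bigl(z_b^{j_a}\bigr)_{a,b}.
\]
Dividing both sides by the Vandermonde $V(z) = \prod_{a<b}(z_a - z_b)$ and applying the Jacobi bialternant identity $\det(w_b^{k_a}) = s_{\mu(K)}(w)\,V(w)$ (where $\mu(K)$ is the partition read off from the sorted set $K$) produces the key identity
\[
s_{\mu(K)}(1+z_1,\dots,1+z_r) \;=\; \sum_J \det(E_{K,J})\,s_{\lambda(J)}(z).
\]

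\textbf{Step 2 (recovering $\psi_K$).} The quantity $\psi_K = \sum_J \det(E_{K,J})$ is the image of $s_{\mu(K)}(1+z)$ under the linear functional that extracts the sum of Schur-basis coefficients. I would implement this through a specific evaluation of the $z$-variables (for example, a principal specialization combined with a residue argument, or a cyclic-sieving-type substitution) to obtain a closed product formula for $\psi_K$ in terms of the partition $\mu(K)$, of Weyl-dimension shape.

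\textbf{Step 3 (leading term as $n\to\infty$).} Substituting $K = [n]\setminus I$, the partition $\mu([n]\setminus I)$ is a staircase with indentations recorded by $I$; its size equals $\sum_k i_k + |I|$, matching the asserted degree. The $n$-dependence of the Weyl-dimension product factorizes so that the leading coefficient in $n$ separates as
\[
\frac{1}{\prod_a(i_a+1)!}\,\prod_{j>k}\frac{i_j - i_k}{i_j + i_k + 2},
\]
which is the claimed formula. The characteristic shape $\prod(m_j-m_k)/\prod(m_j+m_k)$, with $m_a = i_a + 1$, is consistent with a symplectic (``Type C'') Weyl dimension, justifying the section heading.

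The main obstacle will be Step 2: pinning down the correct specialization of the $z$-variables that cleanly extracts $\sum_J \det(E_{K,J})$ from the Schur-polynomial identity, and then showing the resulting product factorization separates into an $n$-dependent factor (controlling the degree) and the $I$-dependent leading coefficient. A backup route is induction on $|I|$ using the Pascal recursion $E_{ij} = E_{i-1,j-1} + E_{i-1,j}$, which produces a recursion for $\LP_I(n)$; the leading-coefficient analysis of this recursion reduces to the same algebraic identity among the products $\prod(i_j\pm i_k+\text{const})$.
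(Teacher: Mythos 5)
There is a genuine gap, and it sits exactly where you flag it. Step 1 is fine: the expansion $\det((1+z_b)^{k_a}) = \sum_J \det(E_{K,J})\det(z_b^{j_a})$ is standard Cauchy--Binet with $\binom{k}{j}=[z^j](1+z)^k$, and dividing by the Vandermonde gives the stated Schur identity. But Step 2 is the entire content of the theorem and is not carried out. The functional ``sum of the Schur-basis coefficients'' is not realized by any substitution of the $z$-variables, since no specialization sends every $s_{\lambda(J)}(z)$ to $1$ simultaneously; one must either pair against $\sum_\lambda s_\lambda$ (Littlewood's identity, which is where the symplectic/type-C structure genuinely enters) or control each minor $\det(E_{K,J})$ asymptotically --- and the latter is precisely the ``longer and more technical'' argument of Manivel--Micha{\l}ek--Monin--Seynnaeve--Vodi\v{c}ka that the present proof is designed to avoid. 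Moreover, the assertion that this yields ``a closed product formula for $\psi_K$ of Weyl-dimension shape'' is unsupported: $\psi_K$ is a sum of minors, and no product formula for it in general is established (or known to us). Step 3 compounds this: the weight of $\lambda([n]\setminus I)$ is $\binom{n}{2}-\binom{n-r}{2}-\sum I = rn-\binom{r+1}{2}-\sum I$, which is linear in $n$ with slope $r=|I|$ and is \emph{not} equal to $\sum_k i_k+|I|$; so ``its size equals the asserted degree'' is false, and the degree statement would have to emerge from the (absent) asymptotic analysis of the Step 2 formula, not from the partition size.

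Your one-sentence ``backup route'' is in fact the proof given in this paper: induct on $|I|$ and then on $\sum I$ using the recurrences \eqref{eq1} and \eqref{eq2}. But the reduction is not to ``the same algebraic identity'': the case $i_1=0$ requires the Product Lemma (\Cref{cor:productlemma}) and the case $i_1>0$ requires the Sum Lemma (\Cref{cor:sumlemma}), two distinct and nontrivial rational-function identities (derived from the Double Product and Double Sum Lemmas by a symmetry-and-divisibility argument on numerators). Without stating and proving those identities, the inductive route is also only a plan. As it stands, neither branch of your proposal closes the argument.
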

	
	Our first main contribution is to provide a more direct proof of the previous theorem, starting from the recurrence relations of the Lascoux polynomials.
	
	Further, all the results mentioned above have natural analogues if we replace the space of symmetric matrices (``type C", \Cref{sec:C}) with the space of general matrices (``type A", see \Cref{sec:A}) or with the space of skew-symmetric matrices (``type D", \Cref{sec:D}). This point of view was already taken in \cite{manivel2020complete}. We continue in this direction, finding formulas for the degree and leading coefficient of Lascoux polynomials for type A and D, which were not previously known.
	\begin{thmx}[Type A, \Cref{thm: leadingcoefA}]
		For sets $I=\{i_1,...,i_r\}$, $J=\{j_1,...,j_r\}$, 
		the degree of the Lascoux polynomials of type A is given by the following expression on $I, J$:
		\[
		\deg(\LP^A_{I,J}(n)) = |I| + \sum I + \sum J
		\]
		and the leading coefficient of $\LP^A_{I,J}$ is
		\[\frac{\prod_{k>l}(i_k-i_l)\prod_{k > l} (j_k-j_l)}{\prod_{k,l=1}^r(i_k+j_l+1) \prod_{k=1}^r (i_k)! \prod_{k=1}^r (j_k)!}.\]
	\end{thmx}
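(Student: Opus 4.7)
My plan is to follow the pattern suggested for the Type C theorem: establish recurrence relations for $\LP^A_{I,J}$ and induct on $r = |I| = |J|$, verifying that the claimed degree and leading coefficient propagate. The base case $r = 0$ gives $\LP^A_{\emptyset,\emptyset}(n) = 1$, which agrees with both formulas trivially.

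First I would derive the Type A analogue of the recurrences used in the Type C argument. Applying the Pascal identity $\binom{i}{j} = \binom{i-1}{j} + \binom{i-1}{j-1}$ row-wise to the bipartite Pascal-type matrix indexing $\LP^A_{I,J}$ should produce an identity of the form
\[
\LP^A_{I,J}(n) = \LP^A_{I',J}(n-1) + \sum_a \varepsilon_a \, \LP^A_{I_a, J_a}(n-1),
\]
together with a symmetric version in $J$, where $I'$ is the appropriate shift of $I$ and the correction terms involve sets $(I_a,J_a)$ obtained by removing and relabeling a single element. The degree statement $\deg \LP^A_{I,J} = r + \sum I + \sum J$ then follows by a straightforward induction that tracks leading powers of $n$ on both sides.

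The substantive step is to show that the conjectured leading coefficient
\[
c_{I,J} = \frac{\prod_{k>l}(i_k - i_l)\prod_{k>l}(j_k - j_l)}{\prod_{k,l}(i_k + j_l + 1)\prod_k (i_k)!(j_k)!}
\]
is preserved by the recurrence, which reduces to an algebraic identity $c_{I,J} = \sum_a \alpha_a \, c_{I_a, J_a}$ with explicit rational coefficients $\alpha_a$. A clean route is to recognize $c_{I,J}$, up to the factor $\prod(i_k)!(j_k)!$, as the Cauchy determinant
\[
\det\!\left(\frac{1}{i_k + j_l + 1}\right)_{1\le k,l\le r} = \frac{\prod_{k<l}(i_k - i_l)(j_k - j_l)}{\prod_{k,l}(i_k + j_l + 1)},
\]
so that the leading-coefficient identity can be read as a Laplace expansion along a row or column. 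As an alternative, the integral representation $\frac{1}{i+j+1} = \int_0^1 t^{i+j}\,dt$ realizes $c_{I,J}$ as an explicit $r$-fold integral against a product of two Vandermondes, and the identity follows by exchanging sum and integral.

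The main obstacle, I expect, lies not in the algebra of this final identity but in the combinatorial bookkeeping: pinning down the exact Pascal-type recurrence adapted to the two index sets $I,J$, matching the resulting shift operators with the correct signs that arise from reordering the shifted sets back into increasing sequences, and confirming that the lower-order corrections indeed drop out when only the top-degree part of each side is retained. Once the recurrence is in a sufficiently clean form, the leading-coefficient verification should reduce to standard manipulations with Vandermondes and Cauchy determinants.
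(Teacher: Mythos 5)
Your overall architecture — induct via recurrence relations and verify that the claimed leading coefficient propagates through an algebraic identity — is exactly the paper's strategy. But there is a genuine gap at the foundation: the recurrences. You propose to derive a single Pascal-type relation of the form $\LP^A_{I,J}(n) = \LP^A_{I',J}(n-1) + \sum_a \varepsilon_a \LP^A_{I_a,J_a}(n-1)$, and you correctly flag this as the main obstacle. The actual recurrences (cited in the paper from Manivel et al., Lemma 6.10 and Theorem 6.11) have a different and essential two-case structure. When $i_1=0$ \emph{and} $j_1=0$, one has $\LP^A_{I,J}(n) = (n-r+1)\LP^A_{I\setminus\{0\},J\setminus\{0\}}(n) - \sum_\ell \LP^A_{I\setminus\{0,i_\ell\}\cup\{i_\ell+1\},J\setminus\{0\}}(n) - \sum_\ell \LP^A_{I\setminus\{0\},J\setminus\{0,j_\ell\}\cup\{j_\ell+1\}}(n)$: this is evaluated at the \emph{same} $n$, carries the degree-raising factor $(n-r+1)$, and decreases $|I|$. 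Otherwise one has the finite-difference relation $\LP^A_{I,J}(n) = \sum_{\epsilon,\mu\in\{0,1\}^r}\LP^A_{I-\epsilon,J-\mu}(n-1)$, which keeps $|I|$ fixed, so the induction must be on $|I|$ first and then on $\sum I + \sum J$ — inducting on $r$ alone is not well-founded. Your guessed recurrence matches neither case, and without the correct ones the degree claim and the propagation of the leading coefficient cannot be set up. Moreover, the two cases require two \emph{different} identities: the second recurrence yields a "sum" identity equating $(r+\sum I+\sum J)\,c_{I,J}$ to a sum over single decrements, while the first yields a "product" identity expressing $\prod i_k\prod j_k/\prod(i_k+1)(j_k+1)$ as $1$ minus two sums; your proposal treats the identity step as a single item.

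On the identity itself, your route is genuinely different from the paper's and, for the sum-type identity, viable and rather elegant: writing $c_{I,J}$ as the Cauchy determinant $\det\bigl(1/(i_k+j_l+1)\bigr)/\bigl(\prod i_k!\prod j_k!\bigr)$ and using $1/(i+j+1)=\int_0^1 t^{i+j}\,dt$ together with the Andr\'eief identity turns the required relation into an integration by parts against a product of two alternants $\det(t_m^{i_k})\det(t_m^{j_l})$; the boundary term at $t_m=0$ vanishes precisely under the hypothesis $i_1>0$ or $j_1>0$, which is the condition under which that recurrence applies. The paper instead proves the two identities (the Double Sum and Double Product Lemmas) as rational-function identities in $2r$ variables by antisymmetry, divisibility, and degree counting — more elementary but longer. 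What your approach buys is a conceptual explanation of where the Cauchy-determinant shape of $c_{I,J}$ comes from; what it still owes is a worked-out analogue for the product-type identity in the $i_1=j_1=0$ case, and of course the correct recurrences to hang everything on.
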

	\begin{thmx}[Type D, \Cref{thm: leadingcoefD}]
		Let $I=\{i_1<\cdots <i_r\}\subset \mathbb{N}$ be a set of nonnegative integers. Then
		\begin{itemize}
			\item[-] If $i_1>0$, $\LP^D_{I}(2n)$ and $\LP^D_{I}(2n+1)$ are polynomials in $n$ of degree $\sum I$ and leading coefficient equal to
			\[
			\frac{2^{\sum I-|I|}\prod_{k>l}(i_k-i_l)}{\prod_{k>l}(i_k+i_l) \prod_k (i_k)!}.
			\]
			\item[-] If $i_1=0$, then $\LP^D_{I}(n)=\LP^D_{I\setminus\{0\}}(n)$ if $n-|I|$ is even, and  $\LP^D_{I}(n)=0$ if $n-|I|$ is odd.  
		\end{itemize}
	\end{thmx}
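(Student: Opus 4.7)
The plan is to adapt the inductive, recurrence-driven proof of the Type C result (\Cref{thm:leading-coefficient-C}) to the skew-symmetric setting. Starting from the definition of $\LP^D_I$ introduced in \Cref{sec:D}, I would first establish recurrence relations that express $\LP^D_I(n)$ in terms of smaller instances, either by removing an element of $I$, lowering an element of $I$, or decreasing $n$. The induction is then carried out on $|I|$, or equivalently on $\sum I$.

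I would address the case $i_1 = 0$ first, since it is the most structural. The reduction $\LP^D_I(n) = \LP^D_{I\setminus\{0\}}(n)$, together with vanishing in the opposite parity class, should follow directly from the definition via an elementary row/column manipulation in the determinantal expansion: the row (or column) indexed by $0$ either reproduces the smaller polynomial, or forces a sign cancellation, depending on the parity of $n - |I|$. Intuitively, this parity obstruction reflects that skew-symmetric matrices admit the Pfaffian as a natural mod-$2$ invariant, which is precisely what creates the quasi-polynomial structure of period $2$.

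For $i_1 > 0$, I would proceed by induction on $|I|$. The base case $|I| = 1$ reduces to verifying a closed form for a single binomial-like sum, which should match the leading coefficient $2^{i_1-1}/i_1!$ predicted by the formula; this is a standard summation identity. In the inductive step, one applies the recurrence and substitutes the leading coefficient of $\LP^D_{I'}$ from the inductive hypothesis for each smaller set $I'$. The combinatorial identity that must hold in order for the leading terms to match should simplify via a symmetric-function manipulation, yielding the ratio $\prod_{k>l}(i_k - i_l)/\prod_{k>l}(i_k + i_l)$ together with a single factor of $2$ per recursion step, which accumulates to $2^{\sum I - |I|}$.

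The main obstacle I anticipate is the parity bookkeeping: the claim that $\LP^D_I(2n)$ and $\LP^D_I(2n+1)$ share the same leading coefficient in $n$ is non-trivial, since in principle a recurrence could distinguish between the two parity classes. One would either prove the recurrence in such a way that each parity class is preserved step-by-step, or argue that the asymptotic leading term is parity-independent even though the lower-order terms need not be. A related subtlety is that the denominator $\prod_{k>l}(i_k + i_l)$ appears without the shift by $+2$ that occurs in Type C; this cleaner product has to emerge naturally from the Type D recursion, which differs from its Type C counterpart by a shift that reflects the distinct coordinate conventions for skew-symmetric versus symmetric matrices.
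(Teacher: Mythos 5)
Your overall strategy---induct on $|I|$ and then on $\sum I$, feed the recurrences with the conjectured leading coefficients, and check that the resulting identity closes---is exactly the paper's. But as written the plan has concrete gaps. The identity you defer to ``a symmetric-function manipulation'' is the entire technical content of the inductive step, and you have not identified or verified it: writing $\ell^D_I$ for the coefficient of $n^{\sum I}$ in $\LP^D_I(2n)$, telescoping \eqref{recD2} over the two parities gives $\deg(\LP^D_I(2n))\,\ell^D_I = 2\sum_{j}\ell^D_{I-e_j}$, and after substituting the inductive formula the identity you need is $\sum_{j} i_j\prod_{k\neq j}\tfrac{(i_k+i_j)(i_k-i_j+1)}{(i_k-i_j)(i_k+i_j-1)}=\sum I$, which is precisely the Sum Lemma (\Cref{cor:sumlemma}) applied with $x_j=i_j$ unshifted---this is also where the denominator $\prod_{k>l}(i_k+i_l)$ without the $+2$ comes from. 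Without proving (or citing) that identity the induction does not close. Relatedly, the parity issue you flag is resolved by the recurrence itself rather than by a separate argument: $\LP^D_I(2n)-\LP^D_I(2n-2)$ collects $\sum_\epsilon \LP^D_{I-\epsilon}(2n-1)+\sum_\epsilon\LP^D_{I-\epsilon}(2n-2)$, i.e.\ \emph{both} parity classes of each smaller quasipolynomial; the induction hypothesis that these two classes share a leading coefficient is what simultaneously produces the factor $2$ per step (hence $2^{\sum I-|I|}$) and propagates parity-independence to $I$. You must also treat the boundary case $i_1=1$: then $I-e_1$ contains $0$, one of the two parity evaluations vanishes by \eqref{recD1}, and one has to check that the single surviving term still equals the corresponding summand of $2\sum_j\ell^D_{I-e_j}$ (it does, because the formula naively evaluated at a set containing $0$ is exactly half of $\ell^D_{(I-e_1)\setminus\{0\}}$), but this needs to be said.

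Finally, for $i_1=0$ there is nothing to derive: the asserted statement is literally the recurrence \eqref{recD1}, quoted from \cite{manivel2020complete}. Your proposed re-derivation from ``the determinantal expansion'' is off target---the coefficients $\alpha_I$ are defined by a Schur-function expansion, not by a determinant with a row indexed by $0$, and the Pfaffian heuristic, while suggestive, is not a proof. Similarly, the recurrences for $i_1>0$ are imported from \cite{manivel2020complete}, not established from scratch; if you intend to rederive them you are taking on a much larger task than the leading-coefficient computation itself.
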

	Lascoux coefficients also appear in the context of \emph{semidefinite programming} (SDP), a subject in optimization theory that concerns the problem of optimizing a linear function over the cone of positive semidefinite matrices. An important invariant that addresses the complexity of these problems is the \emph{algebraic degree of semidefinite programming}. For more information about the algebraic degree of SDP, we refer to \cite{NRS}. Following \cite[Definition 1.4]{manivel2020complete}, here we provide the following definition of the algebraic degree of SDP in the language of algebraic geometry. Let $\mathcal{L} \subseteq S^2 \mathbb{C}^n$ be a general linear space of symmetric matrices of affine dimension $m+1$, and let $SD^{r,n}_m \subseteq \mathbb{P}(\mathcal{L})$ denote the projectivization of the cone of matrices of rank at most $r$ in $\mathcal{L}$. The algebraic degree of SDP, denoted $\delta(m,n,r)$, is the degree of the projective dual of $SD^{r,n}_m$ if this dual is a hypersurface, and zero otherwise.
	
	In \cite{BR} the authors found a formula that expresses $\delta(m,n,r)$ in terms of Lascoux coefficients. In addition, in \cite{manivel2020complete} the authors proved that the function $n \mapsto \delta(m,n,n-s)$ for fixed $m,s > 0$ is a polynomial, and provided another formula for $\delta(m,n,n-s)$ previously conjectured in \cite[Conjeture 21]{NRS}. Similarly for Lascoux polynomials, the results in \cite{manivel2020complete} were also proved for type A and D (see \Cref{sec:delta} for the related definitions). As an application of our previous results for the Lascoux polynomials, we find the degree of the polynomials $\delta(m,n,n-s)$, and their leading coefficient for $s=1$, in type C, A and D.
	
	\begin{thmx} Let $s>0$.
		\begin{itemize}
			\item[-]\normalfont{(Type C, \Cref{thm: semidefinite type C})} The polynomial $\delta(m,n,n-s)$ has degree $m$, for every $m\geq \binom{s+1}{2}$. Moreover
			\[
			LC(\delta(m,n,n-1)) = \frac{2^{m-1}}{m!},
			\]			
			for every $m>0$.
			\item[-]\normalfont{(Type A, \Cref{thm: semidefinite type A})} The polynomial $\delta_A(m,n,n-s)$ has degree $m$, for every $m\geq s^2$. Moreover,
			\[
			LC(\delta_A(m,n,n-1)) = \frac{1}{m!}\binom{2(m-1)}{m-1},
			\]			
			for every $m>0$.
			\item[-]\normalfont{(Type D, \Cref{thm: semidefinite type D})} The polynomial $\delta_D(m,n,n-s)$ has degree $m$, for every $m\geq \binom{2s}{2}$. Moreover, 
			\[
			LC(\delta_D(m,n,n-1)) =\frac{2^{m-2}}{m!}\left(\frac{1}{m}\binom{2(m-1)}{m-1}+1\right),
			\]		
			for every $m>0$.	
		\end{itemize}
		
	\end{thmx}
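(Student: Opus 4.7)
The plan is to use the formula expressing $\delta_?(m,n,n-s)$ as an explicit linear combination of Lascoux polynomials, which was derived in \cite{BR} for type C and extended in \cite{manivel2020complete} to types A and D. Given such an expansion, the degree and leading coefficient of $\delta_?(m,n,n-s)$ as a polynomial in $n$ can be extracted directly from the degrees and leading coefficients of the individual Lascoux polynomials appearing in the sum, which are provided by the earlier theorems of this paper.

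For the degree claim, I would first describe the indexing set of Lascoux polynomials that appear in the expansion of $\delta_?(m,n,n-s)$: a finite family of subsets $I$ (resp.\ pairs $(I,J)$ in type A) whose size and sum depend linearly on $m$ and $s$. Applying $\deg(\LP_I)=|I|+\sum I$ and its analogues, the degree of $\delta_?(m,n,n-s)$ is at most the maximum of these quantities over the indexing set. The thresholds $\binom{s+1}{2}$, $s^2$, $\binom{2s}{2}$ are precisely the dimensions of the spaces of $s\times s$ symmetric matrices, $s\times s$ matrices, and $2s\times 2s$ skew-symmetric matrices; once $m$ reaches this value the extremal $I$ maximising $|I|+\sum I$ lies in the indexing set, and a direct computation identifies this maximum with $m$. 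A separate short argument rules out cancellation: the leading coefficients attached to the extremal $I$'s are positive in each case, so the top-degree coefficient is nonzero and the degree equals $m$ exactly.

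For the leading coefficient when $s=1$, the indexing set collapses to a one-parameter family and the task reduces to evaluating a single finite sum of the rational expressions furnished by the Type C/A/D leading-coefficient formulas. In type C the sum takes the shape $\tfrac{1}{m!}\sum_{k=0}^{m-1}\binom{m-1}{k}=\tfrac{2^{m-1}}{m!}$. In type A the sum takes the Vandermonde shape $\tfrac{1}{m!}\sum_{k=0}^{m-1}\binom{m-1}{k}^{2}$, which by the Chu--Vandermonde identity equals $\tfrac{1}{m!}\binom{2(m-1)}{m-1}$. In type D the sum splits naturally into two pieces reflecting the pairing of indices in the skew-symmetric setting: one piece is of type A flavour and produces the factor $\tfrac{1}{m}\binom{2(m-1)}{m-1}$, the other is of type C flavour and contributes the additive $+1$, giving the combination stated in the theorem.

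The principal obstacle is the bookkeeping that aligns the weights from the expansion of $\delta_?(m,n,n-s)$ into Lascoux polynomials with the denominators appearing in the leading-coefficient formulas, so that after cancellation the remaining sum is recognisable as a Vandermonde, Chu--Vandermonde, or elementary binomial identity. The degrees-thresholds part requires verifying that the arithmetic condition defining the indexing set is satisfied by the extremal $I$; this is where the integrality bounds $\binom{s+1}{2}$, $s^2$, $\binom{2s}{2}$ enter. Once the alignment and the index-set analysis are in place, the evaluation of each sum is routine.
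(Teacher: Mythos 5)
Your proposal follows essentially the same route as the paper: expand $\delta_?(m,n,n-s)$ via the known formulas as a positive combination of Lascoux polynomials, note that every index set in the (nonempty, for $m$ past the threshold) indexing family contributes degree exactly $m$ with positive leading coefficient, and for $s=1$ evaluate the resulting binomial sums (Chu--Vandermonde in type A, the analogous central-binomial identity in type D). The only cosmetic difference is that in type C with $s=1$ the expansion is a single term $\psi_{\{m-1\}}\LP_{\{m-1\}}$ with $\psi_{\{m-1\}}=2^{m-1}$ rather than a genuine sum, and the degree threshold is a nonemptiness condition on the indexing set (all its members give the same degree $m$) rather than a maximization.
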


	This paper is organized as follows.  
	In Section \ref{sec:technical} we prove some technical lemmas that will be used throughout the paper, in Section \ref{sec:C}, \ref{sec:A} and \ref{sec:D} we find the degree and the leading coefficient of the Lascoux polynomials for type C, A and D respectively. Finally, in Section \ref{sec:delta} we find the algebraic degrees of $\delta(m,n,n-1)$ for type C, A and D.
	
	\begin{remark}
		We would like to point out that the terminology ``Lascoux polynomials" appears in the literature in more than one context not necessarily related to our setting. Our choice is motivated by the definitions in \cite{manivel2020complete}. 
	\end{remark}
	~\\
	\textbf{Acknowledgements.} This project originated during the online workshop \href{https://sites.google.com/view/react-2021}{REACT}. The
	authors would like to express their gratitude to the organizers and to the lecturers.  The authors are especially grateful to the lecturers Mateusz Michałek and Tim Seynnaeve for suggesting this topic and for several helpful discussions. H.J. Motwani was partially supported by UGent BOF/STA/201909/038 and FWO grant G0F5921N. L. Venturello was funded by the G\"{o}ran Gustafsson foundation.

	\section{Four identities}\label{sec:technical}
	
	In this paper we will need the following four identities of rational functions. All the identities are thought to be in $k(x_1,\dots,x_r,y_1,\dots,y_r)$, where $k$ is a field of characteristic zero. We start with a ``Double Sum Lemma", expressing the sum of two sets of $r$ variables as a certain sum of rational functions.\\
	\begin{lemma}[Double Sum Lemma]\label{lem:doublesumlemma}
		The identity
		\begin{align}\label{doublesumlemma}
			\sum_{i=1}^r x_i + \sum_{j=1}^r y_j + r=\sum^r_{t=1} x_t \prod_{k \neq t}\frac{x_k - x_t +1}{x_k-x_t} \prod_{l=1}^r\frac{x_t+y_l+1}{x_t+y_l} + \sum^r_{m=1} y_m \prod_{k \neq m}\frac{y_k - y_m +1}{y_k-y_m}\prod_{l=1}^r\frac{x_l+y_m+1}{x_l+y_m}
		\end{align}
		holds for every $r\geq 1$.
	\end{lemma}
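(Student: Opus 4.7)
The plan is to prove the identity by partial-fraction (residue) calculus applied to a cleverly chosen single-variable rational function. Viewing $x_1,\dots,x_r,y_1,\dots,y_r$ as formal parameters in $k(x_1,\dots,x_r,y_1,\dots,y_r)$, I introduce the auxiliary rational function in an extra variable $z$:
\[
f(z) \;=\; z \prod_{k=1}^r \frac{x_k - z + 1}{x_k - z} \prod_{l=1}^r \frac{z + y_l + 1}{z + y_l}.
\]
It has simple poles exactly at the $2r$ points $z=x_t$ and $z=-y_m$, plus a simple pole at $\infty$. My goal is to identify the right-hand side of \eqref{doublesumlemma} with (the negative of) the sum of finite residues of $f$, and the left-hand side with the residue of $f$ at $\infty$, so that the identity becomes an instance of the residue theorem on $\mathbb{P}^1$.

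First I would compute the finite residues. Near $z=x_t$, writing $\frac{x_t-z+1}{x_t-z}=1-\tfrac{1}{z-x_t}$ shows that the only singular contribution comes from this one factor with residue $-1$, yielding
\[
\mathrm{Res}_{z=x_t} f(z) \;=\; -\,x_t \prod_{k\neq t}\frac{x_k-x_t+1}{x_k-x_t}\prod_{l=1}^r \frac{x_t+y_l+1}{x_t+y_l},
\]
and by the symmetric manipulation $\frac{z+y_m+1}{z+y_m}=1+\tfrac{1}{z+y_m}$ near $z=-y_m$,
\[
\mathrm{Res}_{z=-y_m} f(z) \;=\; -\,y_m \prod_{k=1}^r \frac{x_k+y_m+1}{x_k+y_m}\prod_{l\neq m}\frac{y_l-y_m+1}{y_l-y_m}.
\]
These are precisely the negatives of the two summation terms on the right-hand side of \eqref{doublesumlemma}.

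The heart of the argument is then to expand $f(z)$ at $z=\infty$ to order $z^{-1}$. Writing each factor as $1\mp\tfrac{1}{z-x_k}$ or $1\pm\tfrac{1}{z+y_l}$ and expanding both products to order $z^{-2}$, the $z^{-1}$-order coefficients $-r/z$ and $+r/z$ exactly cancel, while the $z^{-2}$-order coefficients assemble (using $2\binom{r}{2}-r^2=-r$) into $-(\sum_i x_i+\sum_j y_j+r)/z^2$; multiplying by the prefactor $z$ gives
\[
f(z) \;=\; z \;-\; \frac{\sum_i x_i + \sum_j y_j + r}{z} \;+\; O(z^{-2}),
\]
so $\mathrm{Res}_{z=\infty} f(z)=\sum_i x_i+\sum_j y_j+r$. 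Invoking the residue theorem on $\mathbb{P}^1$ (the sum of residues of a rational function, including at $\infty$, is zero) produces exactly the claimed identity. The only delicate step is the asymptotic expansion: it must be pushed to the $z^{-2}$ order and the cancellation of the two $r/z$ contributions from the $x$-product and the $y$-product must be visible, since this cancellation is exactly what allows the constant $+r$ on the left-hand side to emerge.
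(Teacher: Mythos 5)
Your argument is correct, and it is genuinely different from the proof in the paper. You encode both sides of \eqref{doublesumlemma} as residues of the single-variable rational function $f(z)=z\prod_k\frac{x_k-z+1}{x_k-z}\prod_l\frac{z+y_l+1}{z+y_l}$: the finite residues at $z=x_t$ and $z=-y_m$ reproduce (up to sign) the two sums on the right-hand side, the expansion at infinity gives $f(z)=z-(\sum_ix_i+\sum_jy_j+r)z^{-1}+O(z^{-2})$, and the global residue identity closes the argument. I checked the expansion: the $z^{-1}$ terms $-r/z$ and $+r/z$ cancel as you say, and the $z^{-2}$ coefficient is $2\binom{r}{2}-r^2-\sum x-\sum y=-r-\sum x-\sum y$, so the bookkeeping is right. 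The paper instead clears denominators, uses antisymmetry under transpositions of the $x$'s (resp.\ $y$'s) and vanishing at $x_a=-y_b$ to show the numerator is divisible by the full denominator, bounds the degree of the quotient $Q$ by $1$, and then pins down the three coefficients of $Q$ by leading-term comparison and the substitution $y_k=-x_k-1$; that method is longer but runs entirely in elementary polynomial algebra and is deliberately parallel to the proof of the Double Product Lemma (\Cref{lem:doubleproductlemma}). Your route is shorter and more conceptual. One small point worth making explicit: the lemma is stated over an arbitrary field $k$ of characteristic zero, so either phrase the ``sum of all residues is zero'' step algebraically (write the partial-fraction decomposition $f(z)=az+b+\sum_t c_t/(z-x_t)+\sum_m d_m/(z+y_m)$ and compare the coefficient of $z^{-1}$ in the expansion at infinity, which is valid over any field), or note that it suffices to prove the identity in $\mathbb{Q}(x_1,\dots,y_r)$, where the complex-analytic residue theorem applies to generic complex specializations. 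With that remark your proof is complete.
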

	\begin{proof}
		We write the right-hand side of (\ref{doublesumlemma}) with a common denominator
		\begin{equation}\label{eq: double sum lemma}
			\frac{\prod_{k>l}(y_k-y_l)A(x_1,\dots,x_r,y_1,\dots,y_r)+\prod_{k>l}(x_k-x_l)B(x_1,\dots,x_r,y_1,\dots,y_r) } {\prod_{k>l}(x_k-x_l)(y_k-y_l)\prod_{k,l=1}^r (x_k+y_l)},
		\end{equation}
		with 
		\[
		A(x_1,\dots,x_r,y_1,\dots,y_r)= \sum_{t=1}^r (-1)^{t-1}x_t\prod_{\substack{k>l\\ k,l\neq t}}(x_k-x_l)\prod_{k\neq t}(x_k-x_t+1)\prod_{l=1}^r(x_t+y_l+1)\prod_{\substack{k,l=1\\k\neq t}}^r(x_k+y_l)
		\]
		and
		\[
		B(x_1,\dots,x_r,y_1,\dots,y_r)=\sum_{m=1}^r (-1)^{m-1}y_m\prod_{\substack{k>l\\ k,l\neq m}}(y_k-y_l)\prod_{k\neq m}(y_k-y_m+1)\prod_{l=1}^r(x_l+y_m+1)\prod_{\substack{k,l=1\\ l\neq m}}^r(x_k+y_l).
		\]
		\textbf{Claim 1:} If we swap the role of $x_a$ and $x_b$, for some $1\leq a<b\leq r$, then 
		\[
		A(x_1,\dots,x_b,\dots,x_a,\dots,x_r,y_1,\dots,y_r)=-A(x_1,\dots,x_r,y_1,\dots,y_r).
		\]
		We analyze each summand in $A(x_1,\dots,x_r,y_1,\dots,y_r)$ separately. If $t\notin\{a,b\}$, then the only factor in the $t$-th summand which is affected by the swap is $\prod_{\substack{k>l\\ k,l\neq t}}(x_k-x_l)$. More precisely, the linear forms $(x_k-x_a)$, with $a<k\leq b$ and the linear forms $(x_b-x_l)$, with $a<l<b$ change sign. As there are $2(b-a)-1$ many such factors, there is a change of sign in $\prod_{\substack{k>l\\ k,l\neq t}}(x_k-x_l)$. If $t=a$, then the only changes of sign are given by the linear forms $(x_b-x_k)$ with $a<k<b$, as each becomes $-(x_k-x_a)$. This accounts for a factor of $(-1)^{b-a-1}$. Together with $(-1)^{t-1}=(-1)^{a-1}$ we obtain $-(-1)^{b-1}$. Hence the $a$-th summand of $A(x_1,\dots,x_b,\dots,x_a,\dots,x_r,y_1,\dots,y_r)$ is equal to the $b$-th summand of $A(x_1,\dots,x_r,y_1,\dots,y_r)$, with the sign changed. The case $t=b$ is analogous. We then have that 
		\[
		A(x_1,\dots,x_r,y_1,\dots,y_r)=\prod_{k>l}(x_k-x_l)A'(x_1,\dots,x_r,y_1,\dots,y_r),
		\]
		for some polynomial $A'(x_1,\dots,x_r,y_1,\dots,y_r)$ which is invariant under the transposition of any two $x$-variables. In the same way we can show that 
		\[
		B(x_1,\dots,x_r,y_1,\dots,y_r)=\prod_{k>l}(y_k-y_l)B'(x_1,\dots,x_r,y_1,\dots,y_r),
		\]
		with $B'(x_1,\dots,x_r,y_1,\dots,y_r)$ invariant under the transposition of any two $y$-variables.\\
		\textbf{Claim 2:} The evaluation of the numerator of \eqref{eq: double sum lemma} in $x_a=-y_b$ is equal to $0$, for every $1\leq a,b\leq r$.
		Let us fix $a$ and $b$. Observe that $(x_a+y_b)$ is a factor in all summands of $A(x_1,\dots,x_r,y_1,\dots,y_r)$ with $t\neq a$, and it is a factor in all summands of $B(x_1,\dots,x_r,y_1,\dots,y_r)$ with $m\neq b$. We then have that, with $x_a=-y_b$, the numerator of \eqref{eq: double sum lemma} equals to:
		\begin{align*}
			&\prod_{k>l}(y_k-y_l)\left( (-1)^{a}\bm{y_b} \prod_{\substack{k>l\\ k,l\neq a}}(x_k-x_l)\prod_{k\neq a}\bm{(x_k+y_b+1)\prod_{l\neq b}(y_l-y_b+1)\prod_{\substack{k,l=1\\k\neq a\\ l\neq b}}^r(x_k+y_l)} \prod_{\substack{k=1\\k\neq a}}^r(x_k-x_a) \right) +\\
			& \prod_{k>l}(x_k-x_l)\left((-1)^{b-1}\bm{y_b}\prod_{\substack{k>l\\ k,l\neq b}}(y_k-y_l)\prod_{k\neq b}\bm{(y_k-y_b+1)\prod_{l\neq a}(x_l+y_b+1)\prod_{\substack{k,l=1\\ l\neq b\\ k\neq a}}^r(x_k+y_l)} \prod_{\substack{l=1\\ l\neq b}}^r(y_l-y_b)\right).
		\end{align*}
		Here we have highlighted in bold the factors which are common to the two summands. To conclude the proof of claim 2 we observe that
		\[
		\prod_{k>l}(x_k-x_l) = (-1)^{a-1}\prod_{\substack{k>l\\ k,l\neq a}}(x_k-x_l)\prod_{\substack{k=1\\k\neq a}}^r(x_k-x_a)
		\]	
		and 
		\[
		\prod_{k>l}(y_k-y_l) = (-1)^{b-1}\prod_{\substack{k>l\\ k,l\neq b}}(y_k-y_l)\prod_{\substack{l=1\\ l\neq b}}^r(y_l-y_b).
		\]	
		This implies that the two summands above contain precisely the same factors in absolute value. As the first is multiplied by $(-1)^{2a-1}=-1$ and the second is multiplied by $(-1)^{2b-2}=1$, those cancel out.\\
		We conclude that the numerator of \eqref{eq: double sum lemma} equals 
		\[
		\prod_{k>l}(x_k-x_l)\prod_{k>l}(y_k-y_l)\prod_{k,l=1}^r (x_k+y_l)Q(x_1,\dots,x_r,y_1,\dots,y_r),
		\] 
		for some polynomial $Q(x_1,\dots,x_r,y_1,\dots,y_r)$ invariant under the transposition of any two $x$-variables and any two $y$-variables. A simple counting of the factors in \eqref{eq: double sum lemma} shows that the degree of $Q(x_1,\dots,x_r,y_1,\dots,y_r)$ is at most $1$. The vector space of polynomials of degree at most $1$ with this symmetry is $3$-dimensional, and therefore we can write 
		\[
		Q(x_1,\dots,x_r,y_1,\dots,y_r) = \lambda \sum_{i=1}^{r} x_i + \mu \sum_{j=1}^{r} y_j + \nu,
		\]
		for some $\lambda,\mu,\nu\in\mathbb{R}$. We first show that $\lambda=\mu=1$. If we order the variables as $y_r>\dots>y_1>x_r>\dots >x_1$, we obtain that the leading term of the numerator of \eqref{eq: double sum lemma} is $y_1^r y_2^{r+1}\cdots y_r^{2r-1} x_2 x_3^{2}\cdots x_{r-1}^{r-2}x_r^r$, while the leading term of the denominator equals to $y_1^r y_2^{r+1}\cdots y_r^{2r-1}x_2 x_3^2\cdots x_r^{r-1}$. The ratio of their coefficients, which is clearly equal to $1$, is the coefficient of $x_r$ in $Q(x_1,\dots,x_r,y_1,\dots,y_r)$, namely $\lambda$. If we order the variables as $x_r>\dots>x_1>y_r>\dots >y_1$
		we obtain in the same way that $\mu=1$.\\
		Finally, to conclude that $\nu=r$ we substitute $y_k=-x_k-1$ for every $1\leq k\leq r$ in the RHS of \eqref{doublesumlemma}. It is immediate to see that it vanishes, as both summands have $(x_t+y_t+1)$ as a factor, for some $t$. We deduce that $\sum_{i=1}^r x_i + \sum_{j=1}^r (-x_j-1) + \nu = -r + \nu =0$, and hence $\nu=r$.
	\end{proof}
	As a corollary, we obtain the following ``Sum Lemma".
	\begin{corollary}[Sum Lemma]\label{cor:sumlemma}
		For all positive integers $r$ the following identity holds:
		\begin{equation}\label{sumlemma}
			x_1+\dots+x_r=\sum_{l=1}^r x_l \prod_{j\neq l}  \frac{(x_j-x_l+1)(x_j+x_l)}{(x_j-x_l)(x_j+x_l-1)}.
		\end{equation}
	\end{corollary}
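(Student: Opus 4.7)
The plan is to derive the Sum Lemma directly from the Double Sum Lemma via the substitution $y_i = x_i - 1$ for all $1 \leq i \leq r$. This substitution is natural because it turns the factors $x_t+y_l$ into $x_t+x_l-1$ and the factors $x_t+y_l+1$ into $x_t+x_l$, so that the ``mixed'' products in the two sums on the right-hand side of \eqref{doublesumlemma} both collapse into the products over $(x_k+x_l)/(x_k+x_l-1)$ that appear in \eqref{sumlemma}. It also turns $y_k - y_m$ into $x_k - x_m$ and $y_k-y_m+1$ into $x_k - x_m +1$, which aligns the two Vandermonde-like factors.

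First I would compute the left-hand side of \eqref{doublesumlemma} under the substitution: since $\sum_j y_j = \sum_j x_j - r$, it equals $2\sum_{i=1}^r x_i - r + r = 2\sum_{i=1}^r x_i$. Next I would rewrite each of the two sums on the right-hand side. After substitution the first sum becomes
\[
\sum_{t=1}^r x_t \prod_{k\neq t}\frac{x_k-x_t+1}{x_k-x_t}\prod_{l=1}^r\frac{x_t+x_l}{x_t+x_l-1},
\]
and the second sum becomes
\[
\sum_{t=1}^r (x_t - 1) \prod_{k\neq t}\frac{x_k-x_t+1}{x_k-x_t}\prod_{l=1}^r\frac{x_t+x_l}{x_t+x_l-1},
\]
where I have renamed $m$ to $t$ in the second sum. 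The two sums share the same product, so adding them yields a single sum with the prefactor $x_t + (x_t - 1) = 2x_t - 1$.

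The key bookkeeping step is to isolate the $l=t$ factor of the inner product, namely $\frac{x_t+x_t}{x_t+x_t-1}=\frac{2x_t}{2x_t-1}$. Doing so transforms $(2x_t-1)\prod_{l=1}^r\frac{x_t+x_l}{x_t+x_l-1}$ into $2x_t\prod_{l\neq t}\frac{x_t+x_l}{x_t+x_l-1}$, since the $(2x_t-1)$ factors cancel. Hence the right-hand side of \eqref{doublesumlemma} becomes
\[
2\sum_{t=1}^r x_t \prod_{k\neq t}\frac{(x_k-x_t+1)(x_k+x_t)}{(x_k-x_t)(x_k+x_t-1)}.
\]
Equating this to $2\sum_{i=1}^r x_i$ and dividing by $2$ gives exactly the Sum Lemma.

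I do not expect any serious obstacle here, since once the substitution $y_i = x_i - 1$ is identified, the proof is a short and essentially mechanical verification. The only subtle point is noticing that the product $\prod_{l=1}^r$ now includes the diagonal term $l=t$, which supplies the crucial factor $\frac{2x_t}{2x_t-1}$ needed to cancel $(2x_t-1)$ and recover the product over $l\neq t$ in \eqref{sumlemma}.
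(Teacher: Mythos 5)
Your proof is correct and is essentially identical to the paper's: both substitute $y_l = x_l - 1$ into the Double Sum Lemma, extract the diagonal factor $\frac{2x_t}{2x_t-1}$ from the product $\prod_{l=1}^r\frac{x_t+x_l}{x_t+x_l-1}$ to cancel the combined prefactor $2x_t-1$, and then divide by $2$. No issues.
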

	\begin{proof}
		On substituting $y_l = x_l - 1$ for all $l$ in (\ref{doublesumlemma}) we get
		\begin{align*}
			\sum_{i=1}^r 2 x_i=& \sum^r_{t=1} x_t \prod_{k \neq t}\frac{x_k - x_t +1}{x_k-x_t} \prod_{l=1}^r\frac{x_t+x_l}{x_t+x_l-1} + \sum^r_{m=1}( x_m -1 )\prod_{k \neq m}\frac{x_k - x_m +1}{x_k-x_m}\prod_{l=1}^r\frac{x_l+x_m}{x_l+x_m-1}\\
			=&\sum^r_{t=1} \frac{2x_t^2 }{2x_t -1} \prod_{k \neq t}\frac{(x_k - x_t +1)(x_t + x_k)}{(x_k-x_t)(x_t + x_k - 1)} + \sum^r_{m=1} \frac{2 x_m(x_m -1)}{2x_m - 1} \prod_{k \neq m}\frac{(x_k - x_m +1)(x_k + x_m)}{(x_k-x_m)(x_k + x_m -1) } \\
			=&\sum^r_{t=1} 2x_t \prod_{k \neq t}\frac{(x_k - x_t +1)(x_t + x_k)}{(x_k-x_t)(x_t + x_k - 1)}.
		\end{align*}
		On cancelling 2 from both sides we get the desired identity.
	\end{proof}
	Next, we prove a ``Double Product Lemma'', involving the product of two sets of variables.
	\begin{lemma}[Double Product Lemma]\label{lem:doubleproductlemma}
		The identity
		\begin{align}\label{doubleproductlemma}
			&\frac{\prod_{k=1}^r x_k \prod_{k=1}^r y_k}{\prod_{k=1}^r (x_k+1) \prod_{k=1}^r (y_k+1)} \\
			=&1 -\sum_{l=1}^r \frac{1}{x_l+1} \prod_{k = 1}^r \frac{x_l+y_k+1}{x_l+y_k+2}\prod_{l \neq k =1}^r\frac{x_k-x_l-1}{x_k-x_l}- \sum_{l=1}^r \frac{1}{y_l+1}  \prod_{k = 1}^r \frac{y_l+x_k+1}{y_l+x_k+2}\prod_{l \neq k = 1}^r\frac{y_k-y_l-1}{y_k-y_l}
			\nonumber
		\end{align}
		holds for every $r \geq 1$.
	\end{lemma}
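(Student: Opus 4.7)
My plan is to prove \eqref{doubleproductlemma} by residue calculus in a single auxiliary variable, which I expect to be substantially shorter than the symmetry/cofactor analysis used above for the Double Sum Lemma. Since both sides of \eqref{doubleproductlemma} are rational functions in $x_1,\dots,x_r,y_1,\dots,y_r$, it is enough to argue for generic values of the variables.

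The key object is the auxiliary rational function in a new variable $t$:
\[
\Phi(t) \;=\; \frac{1}{t+1}\,\prod_{k=1}^{r}\frac{t+y_k+1}{t+y_k+2}\,\prod_{k=1}^{r}\frac{x_k-t-1}{x_k-t}.
\]
For generic $x_k,y_k$ this has $2r+1$ simple finite poles: at $t=-1$, at $t=x_l$ for $l=1,\dots,r$, and at $t=-y_l-2$ for $l=1,\dots,r$. I would compute each residue directly. The residue at $t=-1$ is $\prod_k \frac{y_k}{y_k+1}\prod_k\frac{x_k}{x_k+1}$, exactly the LHS of \eqref{doubleproductlemma}. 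The residue at $t=x_l$ works out to
\[
\frac{1}{x_l+1}\,\prod_{k=1}^{r}\frac{x_l+y_k+1}{x_l+y_k+2}\,\prod_{k\neq l}\frac{x_k-x_l-1}{x_k-x_l},
\]
which is precisely the $l$-th summand of the first sum on the RHS, and by the analogous computation the residue at $t=-y_l-2$ matches the $l$-th summand of the second sum.

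Finally, the substitution $t=1/u$ gives
\[
\Phi(1/u) \;=\; \frac{u}{u+1}\,\prod_{k=1}^{r}\frac{1+u(y_k+1)}{1+u(y_k+2)}\,\prod_{k=1}^{r}\frac{1+u(1-x_k)}{1-u x_k},
\]
so $\Phi(1/u)/u$ has value $1$ at $u=0$. Hence $\Phi(t)=t^{-1}+O(t^{-2})$ at infinity, giving $\operatorname{Res}_{t=\infty}\Phi=-1$. The fact that the residues of a rational function sum to zero then yields
\[
\text{LHS of }\eqref{doubleproductlemma} \;+\; \sum_{l=1}^{r}\bigl(l\text{-th term of first sum}\bigr) \;+\; \sum_{l=1}^{r}\bigl(l\text{-th term of second sum}\bigr) \;-\; 1 \;=\; 0,
\]
which is \eqref{doubleproductlemma} after rearrangement.

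The only real obstacle is careful sign bookkeeping in the two residues: at $t=x_l$ the factors $(t-x_l)/(x_l-t)=-1$ and $(x_l-t-1)|_{t=x_l}=-1$ combine to give the required $+1$, and an analogous cancellation occurs at $t=-y_l-2$. Everything else is mechanical; in particular, the clean identification $\operatorname{Res}_\infty \Phi=-1$ reflects the fact that the products $\prod\frac{t+y_k+1}{t+y_k+2}$ and $\prod\frac{x_k-t-1}{x_k-t}$ behave like $1-r/t+O(t^{-2})$ and $1+r/t+O(t^{-2})$ respectively at infinity, so their cross-terms telescope. An elementary proof imitating the Double Sum Lemma (clear denominators, exhibit divisibility by the relevant linear factors, bound the degree of the cofactor, then evaluate) would also work, but would require considerably more writing.
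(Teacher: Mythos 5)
Your residue argument is correct, and it is a genuinely different (and considerably shorter) route than the one taken in the paper. The paper's proof clears denominators, uses antisymmetry under swapping $x_i\leftrightarrow x_j$ and $y_i\leftrightarrow y_j$ to factor out the two Vandermonde products, shows divisibility of the remaining cofactor by each $(x_i+y_j+2)$ via an explicit cancellation, bounds the degree of what is left, and finally pins it down by evaluating at $x_1=-1$; your proof replaces all of this multivariate bookkeeping with the single observation that the identity is the statement that the residues of
\[
\Phi(t)=\frac{1}{t+1}\prod_{k=1}^{r}\frac{t+y_k+1}{t+y_k+2}\prod_{k=1}^{r}\frac{x_k-t-1}{x_k-t}
\]
over $\mathbb{P}^1$ sum to zero. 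I checked the three residue computations: at $t=-1$ one gets the left-hand side; at $t=x_l$ the factor $\frac{x_l-t-1}{x_l-t}=1+\frac{1}{t-x_l}$ contributes residue $+1$, producing exactly the $l$-th term of the first sum; at $t=-y_l-2$ the factor $\frac{t+y_l+1}{t+y_l+2}=1-\frac{1}{t+y_l+2}$ contributes residue $-1$, which combines with $\frac{1}{t+1}\big|_{t=-y_l-2}=\frac{-1}{y_l+1}$ to give the $l$-th term of the second sum with the correct sign; and since $\Phi(t)=t^{-1}+O(t^{-2})$ at infinity, the residue there is $-1$. The genericity caveat (all $2r+1$ finite poles distinct and simple) is harmless since both sides are rational functions. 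What your approach buys is brevity and a conceptual explanation of why the three families of terms must balance; what the paper's approach buys is that it runs entirely in elementary polynomial algebra and parallels the proof of the Double Sum Lemma, for which a single-variable generating function is less immediately available. If you write this up, spell out the two residue evaluations at $t=x_l$ and $t=-y_l-2$ explicitly rather than saying you "would" compute them, since the sign bookkeeping there is the only place an error could hide.
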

	\begin{proof}
		First, we multiply both sides of (\ref{doubleproductlemma}) by $\prod_{k=1}^r (x_k+1) \prod_{k=1}^r (y_k+1)$. Thus, we are proving the following identity:
		
		\[ \begin{split}
			\prod_{k=1}^r x_k \prod_{k=1}^r y_k =&\prod_{k=1}^r (x_k+1)(y_k+1)- 
			\sum_{l=1}^r \prod_{l\neq k=1}^r(x_k+1)\prod_{k=1}^r(y_k+1) \prod_{k = 1}^r \frac{x_l+y_k+1}{x_l+y_k+2}\prod_{l \neq k =1}^r\frac{x_k-x_l-1}{x_k-x_l}-\\
			&- \sum_{l=1}^r \prod_{l\neq k=1}^r(y_k+1)\prod_{k=1}^r(x_k+1)  \prod_{k = 1}^r \frac{y_l+x_k+1}{y_l+x_k+2}\prod_{l \neq k = 1}^r\frac{y_k-y_l-1}{y_k-y_l}.
		\end{split} \]
		We can put everything on the right-hand side to the common denominator to obtain 
		\[\mathrm{RHS}=\frac{A(x_1,\dots,x_r,y_1,\dots,y_r)}{\prod_{1\le k<l\le r}(x_k-x_l)(y_k-y_l)\prod_{1\le k,l\le r}(y_l+x_k+2)},\]
		where $A(x_1,\dots,x_r,y_1,\dots,y_r)=A(\overline x,\overline y)$ is a polynomial of degree at most $2r^2+r$.
		
		The next step is to see what happens when we exchange the values of $x_i$ and $x_j$. Clearly, the right-hand side does not change its value. However, the denominator in the equation above changes sign, thus also polynomial $A(\overline x,\overline y)$ must change sign. That means that $A(\overline x,\overline y)$ is divisible by $\prod_{1\le k<l\le r}(x_k-x_l)$ and after dividing we obtain a symmetric polynomial in $x_1,\dots,x_r$. 
		
		Analogously, the same holds when we exchange $y_i$ and $y_j$, and we can write \[A(\overline x,\overline y)=\prod_{1\le k<l\le r}(x_k-x_l)(y_k-y_l)\cdot B(\overline x,\overline y),\]
		where $B(\overline x,\overline y)$ is a polynomial of degree at most $r^2+2r$, symmetric in both $x_1,\dots x_r$ and $y_1,\dots, y_r$.
		
		Now we multiply the RHS by $(x_1+y_1+2)$ and plug in $x_1+y_1+2=0$ . Clearly all summands except those corresponding to $l=1$ vanish. Moreover for the two summands left we have
		
		\[\begin{split}
			&-(x_1+y_1+1)\prod_{ k=2}^r(x_k+1)\prod_{k=1}^r(y_k+1)\prod_{k =2}^r\frac{(x_1+y_k+1)(x_k-x_1-1)}{(x_1+y_k+2)(x_k-x_1)}-\\
			&-(x_1+y_1+1)\prod_{k=2}^r(y_k+1)\prod_{k=1}^r(x_k+1)\prod_{k = 2}^r\frac{(y_1+x_k+1)(y_k-y_1-1)}{(y_1+x_k+2)(y_k-y_1)}
		\end{split}\]
		
		After substituting $y_1=-2-x_1$, we obtain
		\[\begin{split}
			&-(-1)(-x_1-1)\prod_{k=2}^r(y_k+1)(x_k+1)\prod_{k =2}^r\frac{(x_1+y_k+1)(x_k-x_1-1)}{(x_1+y_k+2)(x_k-x_1)}-\\
			&-(-1)(x_1+1)\prod_{k=2}^r(x_k+1)(y_k+1)\prod_{k = 2}^r\frac{(x_k-x_1-1)(y_k+x_1+1)}{(x_k-x_1)(y_k+x_1+2)}=0.
		\end{split}\] 
		
		This implies that polynomial $B(\overline x,\overline y)$ must be divisible by $(x_1+y_1+2)$. From symmetry it is also divisible by $(x_i+y_j+2)$ for any $i,j$.
		
		Hence, \[B(\overline x,\overline y)=\prod_{1\le k,l\le r}(x_k+y_l+2)\cdot C(\overline x,\overline y),\]
		where $C(\overline x,\overline y)$ is a polynomial of degree at most $2r$, symmetric in $\overline x$ and $\overline y$. We then have that $\mathrm{RHS}=C(\overline x,\overline y)$.
		
		Next we plug in $x_1=-1$. Again, every term, except the one from the first sum for $l=1$ is 0. Thus, we get	
		\[\begin{split}
			C(-1,x_2,\ldots,x_r,y_1,\ldots,y_r)&=-\prod_{k=2}^r(x_k+1)\prod_{k=1}^r(y_k+1) \prod_{k = 1}^r \frac{x_1+y_k+1}{x_1+y_k+2}\prod_{k =2}^r\frac{x_k-x_1-1}{x_k-x_1} \\
			&=- \prod_{k=2}^r(x_k+1)\prod_{k=1}^r(y_k+1) \prod_{k = 1}^r \frac{y_k}{y_k+1}\prod_{k =2}^r\frac{x_k}{x_k+1}\\
			&=-\prod_{k=2}^r(x_k)\prod_{k=1}^r(y_k)=\prod_{k=1}^r(x_ky_k)
		\end{split}
		\]
		Therefore, for $x_1=-1$, the desired equality holds. Analogously, the same is true for any $x_i=-1$ or $y_i=-1$. This means that $C(\overline x,\overline y)-\prod_{k=1}^r(x_ky_k)$ is a polynomial divisible by $\prod_{k=1}^r(x_k+1)(y_k+1)$. 
		
		Furthermore, the only term of degree $2r$ in RHS is the first term $\prod_{k=1}^r(x_k+1)(y_k+1)$, as all other summands have degree at most $2r-1$. In particular, the degree $2r$ part of $C(\overline x,\overline y)$ is equal to $\prod_{k=1}^r(x_ky_k)$, and the difference $C(\overline x,\overline y)-\prod_{k=1}^r(x_ky_k)$ is of degree at most $2r-1$. Since it is divisible by a degree $2r$ polynomial, it must be 0, which proves the lemma.
	\end{proof}
	
	We conclude this section with a ``Product Lemma" derived from \Cref{lem:doubleproductlemma}, in a similar way as \Cref{cor:sumlemma} was obtained from \Cref{lem:doublesumlemma}.

	\begin{corollary}[Product Lemma]\label{cor:productlemma} For every positive integer $r$ the following identity holds:
		\[x_1\cdots x_r= \prod_{j = 1}^r(x_j+2)-2\sum_{l=1}^{r}\prod_{l\neq j=1}^r\frac{(x_j+2)(x_j-x_l-1)(x_j+x_l+2)}{(x_j-x_l)(x_j+x_l+3)}.  \]
	\end{corollary}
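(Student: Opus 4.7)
The plan is to derive the Product Lemma from the Double Product Lemma by a single substitution, exactly analogous to how the Sum Lemma was obtained from the Double Sum Lemma via $y_l = x_l - 1$. Here the right substitution to try is $y_l = x_l + 1$, motivated by the observation that the target identity contains the factors $(x_j+x_l+2)$ and $(x_j+x_l+3)$, which match $(x_l+y_k+1)$ and $(x_l+y_k+2)$ in the Double Product Lemma after this shift.

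First I would substitute $y_l = x_l+1$ for every $l$ into \Cref{lem:doubleproductlemma}. The left-hand side collapses to $\prod_{k=1}^r x_k / \prod_{k=1}^r (x_k+2)$, since $\prod(y_k) = \prod(x_k+1)$ cancels with $\prod(x_k+1)$ in the denominator. On the right-hand side, both sums share the same inner product
\[
P_l(x) := \prod_{k\neq l}\frac{(x_l+x_k+2)(x_k-x_l-1)}{(x_l+x_k+3)(x_k-x_l)},
\]
once one pulls out the $k=l$ term of the first product in each sum, which equals $\frac{2x_l+2}{2x_l+3}$ in both cases. The first sum then acquires prefactor $\frac{1}{x_l+1}\cdot\frac{2x_l+2}{2x_l+3} = \frac{2}{2x_l+3}$, and the second sum acquires prefactor $\frac{1}{x_l+2}\cdot\frac{2x_l+2}{2x_l+3} = \frac{2(x_l+1)}{(x_l+2)(2x_l+3)}$.

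Adding these two prefactors, the common denominator $(x_l+2)(2x_l+3)$ combines with numerator $2(x_l+2) + 2(x_l+1) = 4x_l+6 = 2(2x_l+3)$, and the factor $2x_l+3$ cancels, leaving the single prefactor $\frac{2}{x_l+2}$. Thus after substitution the Double Product Lemma becomes
\[
\frac{\prod_{k=1}^r x_k}{\prod_{k=1}^r (x_k+2)} = 1 - \sum_{l=1}^r \frac{2}{x_l+2}\, P_l(x).
\]
Multiplying through by $\prod_{k=1}^r (x_k+2)$ absorbs the $\frac{1}{x_l+2}$ into the product, turning $\prod_{k\neq l}(x_k+2)$ into the outer factor $\prod_{j\neq l}(x_j+2)$ displayed in the statement, and yields the claimed identity.

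No obstacle is really expected: once the substitution is identified, the entire proof reduces to the little arithmetic simplification of the two prefactors into $\frac{2}{x_l+2}$. The only care needed is bookkeeping around the $k=l$ term of the first product in each sum of \Cref{lem:doubleproductlemma}, which is the step that produces the common factor $\frac{2x_l+2}{2x_l+3}$ and makes the two sums collapse into the single sum appearing in the Product Lemma.
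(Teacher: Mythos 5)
Your proposal is correct and follows exactly the paper's own proof: specialize \Cref{lem:doubleproductlemma} at $y_l=x_l+1$, extract the $k=l$ factor $\tfrac{2x_l+2}{2x_l+3}$ from each sum, combine the two prefactors into $\tfrac{2}{x_l+2}$, and clear the denominator $\prod_k(x_k+2)$. The arithmetic checks out at every step, so there is nothing to add.
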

	\begin{proof}
		For every $1\leq i\leq r$, we specialize the identity in Lemma \ref{lem:doubleproductlemma} with $y_i=x_i+1$. The left hand side of the identity is then equal to $\frac{\prod_{k=1}^r x_k}{\prod_{k=1}^r (x_k+2)}$. For the right-hand side, we obtain
		\begin{align*}
			& 1 -
			\sum_{l = 1}^r \frac{1}{x_l+1} \prod_{k = 1}^r \frac{x_l+x_k+2}{x_l+x_k+3}\prod_{l \neq k = 1}^r\frac{x_k-x_l-1}{x_k-x_l}
			- \sum_{l = 1}^r \frac{1}{x_l+2}  \prod_{k = 1}^r \frac{x_l+x_k+2}{x_l+x_k+3}\prod_{l \neq k = 1}^r\frac{x_k-x_l-1}{x_k-x_l}\\=
			& 1 -
			\sum_{l = 1}^r \left(  \frac{1}{x_l+1}\cdot \frac{2x_l+2}{2x_l+3}+\frac{1}{x_l+2} \cdot\frac{2x_l+2}{2x_l+3}\right)\prod_{l \neq k = 1}^r\frac{(x_k-x_l-1)(x_l+x_k+2)}{(x_k-x_l)(x_l+x_k+3)}\\=
			& 1- \sum_{l = 1}^r \frac{2}{x_l+2}\prod_{l \neq k = 1}^r\frac{(x_k-x_l-1)(x_l+x_k+2)}{(x_k-x_l)(x_l+x_k+3)}.
		\end{align*}
		Multiplying both sides by $\prod_{j=1}(x_j+2)$ yields the desired identity.
	\end{proof}
	
	\section{Type C}\label{sec:C}
	
	The Lascoux polynomials play an essential role in proving the polynomiality of the ML-degree of linear concentration models. In this section we study the leading coefficient of these polynomials. Following  \cite{manivel2020complete}, we start by setting the notation and recalling the definition of Schur polynomials, Lascoux coefficients and Lascoux polynomials.

	A \emph{partition} $\lambda$ is a nonincreasing sequence of nonnegative integers $(\lambda_1,\dots,\lambda_r)$. The \emph{length} of the partition is the length of the sequence, the \emph{weight} is $\sum\lambda = \sum_{i=1}^r \lambda_i$.
	For a set $I = \{ i_1,\dots,i_r \}$ of nonnegative integers with $i_1 < i_2 < \dots < i_r$, we denote with $|I|$ its cardinality and with $\sum I =\sum_{j=1}^r i_j$. We associate to $I$ the corresponding partition
	\[ \lambda(I) = ( i_r-(r-1), i_{r-1}-(r-2),\dots,i_2-1,i_1 ). \]
	For a partition $\lambda$ of length $k$ its associated \emph{Schur polynomial} $s_\lambda$ is defined as follows:
	\[ s_\lambda(x_1,\dots,x_k) = \frac{\det (x_j^{\lambda_i + k - i})_{ij}}{\det (x_j^{k - i})_{ij}}. \]
	Note that the denominator of $s_\lambda$ is the Vandermonde determinant $\prod_{i < j}(x_i-x_j)$. The degree of $s_\lambda$ is equal to the weight $\sum\lambda$ of the partition. As an example, the elementary symmetric polynomial in $k$ variables of degree $r$ is the Schur polynomial with partition $\lambda = (\underbrace{1,\dots,1}_{r},0,\dots,0)$ of length $k$: 
	\[ s_{\lambda}(x_1,\dots,x_k) = \sum_{i_1 < \dots < i_r} x_{i_1} \dots x_{i_r}. \]
	Throughout the paper, the leading coefficient of a polynomial $p$ will be denoted by $LC(p)$.	
	
	\begin{definition}
		The \emph{Lascoux coefficients} are the numbers $\psi_I$ such that the following identity holds:
		\[
		s_{(d,0,\dots,0)}(\{x_i+x_j: 1\leq i\le j\leq k\}) = \sum_{\substack{|I|=k\\ \sum\lambda(I)=d }}\psi_I s_{\lambda(I)}(x_1,\dots,x_k).
		\]
		The \emph{Lascoux polynomial} is the following function:
		\[
		\LP_I(n)=\begin{cases}
			\psi_{[n]\setminus I} & \text{ if }I\subseteq [n]\\
			0 & \text{otherwise}
		\end{cases}
		\]
	\end{definition}
	
	\begin{remark}
		Notice that the previous definition differs from the one given in the introduction. In fact, there are many equivalent ways of defining the Lascoux coefficients. 
		While the one given in the introduction may be easier to state, the definition used here has the advantage of being naturally extended also for types A and D. However, in this article 
		we will just make use of the recurrence relations from \cite{manivel2020complete}, without worrying too much about which definition we use.  
		There is also a geometrical way of defining Lascoux coefficients. More precisely, they are the Segre classes of the second symmetric power of the universal bundle over the Grassmannian.
		For more definitions and formulas about Lascoux coefficients we refer the reader to \cite[Appendix]{lascoux1989giambelli}.
	\end{remark}
	
	The authors in \cite{manivel2020complete} give different proofs that the Lascoux polynomials are indeed polynomials. The first is the simplest one, and is based on the following recurrence relations:
	
	Fix $I=\{i_1<i_2<\cdots<i_r\}\subset \mathbb{N}$.
	\begin{enumerate}
		\item If $i_1=0$, then 
		\begin{equation}\label{eq1}
			\LP_I(n)=(n-r+1)\LP_{I\backslash\{0\}}(n)-2\sum_{\substack{l>1\\i_{l+1}>i_l+1}}\LP_{I\cup \{i_l+1\}\backslash\{0,i_l\}}(n).
		\end{equation}
		
		\item If $i_1>0$, then
		\begin{equation}\label{eq2}
			\LP_I(n)-\LP_I(n-1)=\sum_{\epsilon\in\{0,1\}^r\backslash0}\LP_{I-\epsilon}(n-1),
		\end{equation}
		where $I-\epsilon:=\{i_1-\epsilon_1,\ldots,i_r-\epsilon_r\}$ and $\LP_{I-\epsilon}=0$ if there is a repeated element in $I-\epsilon$.
	\end{enumerate}
	
	The degree and leading coefficient of the Lascoux polynomials is also known:
	\begin{theorem} \cite[Theorem 4.12]{manivel2020complete}\label{thm:leading-coefficient-C}
		$\deg\LP_I=|I|+\sum I$, the leading coefficient is \[
		\frac{\prod_{j>k}(i_j-i_k)}{(i_1+1)!\cdots(i_r+1)!\prod_{j>k}(i_j+i_k+2)}.
		\]
	\end{theorem}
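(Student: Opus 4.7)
The plan is to induct on the pair $(|I|, \sum I)$ in lexicographic order, splitting the inductive step by the sign of $i_1$ and in each case matching the induction to one of the two key identities of \Cref{sec:technical}. The base case $I = \emptyset$ is immediate: $\LP_\emptyset(n) = \psi_{[n]} = 1$, which matches the claimed formula once we interpret empty products as $1$.

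Suppose first that $i_1 > 0$, and apply recurrence (\ref{eq2}). By induction, each $\LP_{I-\epsilon}$ on the right is a polynomial of degree $|I| + \sum I - \sum \epsilon$ with a known leading coefficient. The finite difference $\LP_I(n) - \LP_I(n-1)$ therefore has degree at most $|I|+\sum I - 1$, so $\LP_I$ is a polynomial of degree at most $|I|+\sum I$, and only the $r$ summands with $\epsilon$ a standard basis vector $e_l$ contribute to the top coefficient. Using that the difference of a degree-$d$ polynomial has leading coefficient $d\cdot LC$, one obtains
\[
(|I|+\sum I)\,LC(\LP_I) = \sum_{l=1}^r LC(\LP_{I-e_l}).
\]
Substituting the inductive formulas for $LC(\LP_{I-e_l})$ together with the candidate value of $LC(\LP_I)$, and setting $x_l = i_l+1$ so that $|I|+\sum I = x_1+\cdots+x_r$, a brief sign manipulation reorganizes every factor indexed by $j<l$ into the same shape as those indexed by $j>l$, and the required identity becomes exactly the Sum Lemma (\Cref{cor:sumlemma}).

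Now suppose $i_1 = 0$ and apply recurrence (\ref{eq1}). Both $(n-r+1)\LP_{I\setminus\{0\}}(n)$ and each $\LP_{I\cup\{i_l+1\}\setminus\{0,i_l\}}(n)$ are polynomials of degree $|I|+\sum I$, while their indexing sets have cardinality $|I|-1$, so the induction hypothesis supplies their leading coefficients. The restriction $i_{l+1}>i_l+1$ in the sum is automatic, since whenever $i_{l+1}=i_l+1$ the modified set has a repetition and the corresponding Lascoux polynomial vanishes by convention. Equating top coefficients, dividing through by the (positive, hence nonzero) claimed value of $LC(\LP_{I\setminus\{0\}})$, and setting $x_m = i_m + 1$ for $m \geq 2$, the identity to verify is precisely the Product Lemma (\Cref{cor:productlemma}) after the reparametrization $x_l \mapsto x_l-1$.

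The genuinely non-trivial step is supplying the two rational-function identities that close the induction, and this is the role of \Cref{sec:technical}. Once the Sum Lemma and the Product Lemma are in hand, everything else is bookkeeping: writing down how the candidate leading coefficient transforms under the elementary moves $I \mapsto I-e_l$ and $I \mapsto (I\cup\{i_l+1\})\setminus\{0,i_l\}$, and reading off the resulting expressions as specializations of those two identities.
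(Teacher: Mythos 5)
Your proposal is correct and follows essentially the same route as the paper's own proof: induction on $(|I|,\sum I)$, recurrence \eqref{eq2} combined with the Sum Lemma (\Cref{cor:sumlemma}) via the substitution $x_l=i_l+1$ when $i_1>0$, and recurrence \eqref{eq1} combined with the Product Lemma (\Cref{cor:productlemma}) when $i_1=0$, with the vanishing convention handling the restriction $i_{l+1}>i_l+1$ exactly as in the paper. No substantive differences.
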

	
	However, the proof of this theorem in \cite{manivel2020complete} does not use the recurrence relations \eqref{eq1}, \eqref{eq2}, but a completely different approach, where the Lascoux coefficients $\psi_I$ are expressed as a sum of minors of the Pascal triangle matrix, using the definition given in the introduction. Here instead, we provide a direct proof of this theorem, by using just the recurrence relations \eqref{eq1}, \eqref{eq2}. Moreover, this method will be also useful in computing the degree and leading coefficient of Lascoux polynomials in types A and D, which is a new result in this article.
	
	\begin{proof}[Proof of Theorem \ref{thm:leading-coefficient-C}]
		We proceed analogously as in the first proof of polynomiality of Lascoux polynomials in \cite{manivel2020complete}. Thus, we proceed by induction, first on $|I|$, then on $\sum I$.
		
		The base case is $I=\emptyset$, when $\LP_I=1$ and the statement holds.
		
		For set $I$ we define $\lp_I$ to be the coefficient of $n^{|I|+\sum I}$ in $\LP_I(n)$. Next, we fix $I$ and assume that the statement is true for all $I'$ with $|I'|<|I|$ or $|I'|=|I|$ and $\sum I'<\sum I$. we consider two cases.
		
		\textbf{Case 1:} $i_1=0$. Then 
		\[\LP_I(n)=(n-r+1)\LP_{I\backslash\{0\}}(n)-2\sum_{\substack{l>1\\i_{l+1}>i_l+1}}\LP_{I\cup \{i_l+1\}\backslash\{0,i_l\}}(n)\]
		
		By the induction hypothesis, all terms on the right-hand side are polynomials of degree $|I|+\sum I$. Also from induction hypothesis we know their leading coefficients. Moreover, in the sum we can ignore the condition for $i_{l+1}>i_l$ simply by defining $\LP_{I'}:=0$, if $I'$ has repeated elements. Note that the formula for the leading coefficient holds in this case, since it is 0. Thus, by comparing the coefficients of $n^{|I|+\sum I}$ on both sides we get:
		\begin{align*}
			\lp_I=&\lp_{I\setminus\{0\}}-2\sum_{l>1}\lp_{I\cup \{i_l+1\}\backslash\{0,i_l\}}\\
			=&\frac{\prod_{j>k>1}(i_j-i_k)}{(i_2+1)!\cdots(i_r+1)!\prod_{j>k>1}(i_j+i_k+2)}-\\&-2\sum_{l=2}^{r}\frac{\prod_{j>k>1}(i_j-i_k)}{(i_2+1)!\cdots(i_r+1)!\prod_{j>k>1}(i_j+i_k+2)}\cdot\frac{1}{i_l+2}\cdot\prod_{l\neq j=2}^r\frac{(i_j-i_l-1)(i_j+i_l+2)}{(i_j-i_l)(i_j+i_l+3)}\\
			=&\frac{\prod_{j>k>1}(i_j-i_k)}{(i_2+1)!\cdots(i_r+1)!\prod_{j>k>1}(i_j+i_k+2)} \left(1-2\sum_{l=2}^{r} \frac{1}{i_l+2}\prod_{l\neq j=2}^r\frac{(i_j-i_l-1)(i_j+i_l+2)}{(i_j-i_l)(i_j+i_l+3)}\right)\\
			=&\frac{\prod_{j>k>1}(i_j-i_k)}{(i_2+1)!\cdots(i_r+1)!\prod_{j>k}(i_j+i_k+2)} \left(\prod_l (i_l+2)-2\sum_{l=2}^{r}\prod_{l\neq j=2}^r\frac{(i_l+2)(i_j-i_l-1)(i_j+i_l+2)}{(i_j-i_l)(i_j+i_l+3)}\right)\\
			=&\frac{\prod_{j>k>1}(i_j-i_k)}{(i_2+1)!\cdots(i_r+1)!\prod_{j>k}(i_j+i_k+2)}\cdot i_2\cdots i_k =\frac{\prod_{j>k}(i_j-i_k)}{(i_2+1)!\cdots(i_r+1)!\prod_{j>k}(i_j+i_k+2)},
		\end{align*}
		where we applied Corollary \ref{cor:productlemma}	for $i_2,\dots,i_r$ with $r=k$, $x_1=0$ and $x_j=i_j$ for $2\leq j\leq k$. This proves the theorem in this case.
		
		\textbf{Case 2:} $i_1>0$. Then
		\[\LP_I(n)-\LP_I(n-1)=\sum_{\epsilon\in\{0,1\}^r\backslash0}\LP_{I-\epsilon}(n-1)\]
		By induction hypothesis, all terms on the right-hand side are polynomials of degree $|I|+\sum I-\sum_{i=1}^r \epsilon_i$ with positive leading coefficients. Thus, the right-hand side is a polynomial of degree $|I|+\sum I-1$, and to the coefficient of $n^{|I|+\sum I-1}$ contribute only terms for $\sum_{i=1}^r \epsilon_i = 1$.
		
		It follows that $\LP_I$ is a polynomial of degree $|I|+\sum I$ and the coefficient of $n^{|I|+\sum I-1}$ is $(|I|+\sum I)\lp_I$. Using the induction hypothesis we can compare the leading coefficients of both sides and get:
		\begin{align*}
			(i_1+\dots+i_r+r)\lp_I=&\sum_{l=1}^r\frac{\prod_{j>k}(i_j-i_k)}{(i_1+1)!\cdots(i_r+1)!\prod_{j>k}(i_j+i_k+2)}(i_l+1)\prod_{l\neq j=1}^{r}\frac{(i_j-i_l+1)(i_j+i_l+2)}{(i_j-i_l)(i_j+i_l+1)}\\
			=&\frac{\prod_{j>k}(i_j-i_k)}{(i_1+1)!\cdots(i_r+1)!\prod_{j>k}(i_j+i_k+2)} \left(\sum_{l=1}^{r}(i_l+1) \prod_{l\neq j=1}^r\frac{(i_j-i_l+1)(i_j+i_l+2)}{(i_j-i_l)(i_j+i_l+1)}\right)\\
			=&\frac{\prod_{j>k}(i_j-i_k)}{(i_1+1)!\cdots(i_r+1)!\prod_{j>k}(i_j+i_k+2)} \cdot (i_1+\dots+i_r+r),
		\end{align*}
		where we used Corollary \ref{cor:sumlemma} with $x_j=i_j+1$ for $1\leq j\leq r$. Then the statement follows by cancelling $(i_1+\dots+i_r)$ from both sides.
	\end{proof}
	
	\section{Type A}\label{sec:A}
	In \cite[Section 6]{manivel2020complete}, the authors have defined the Type A Lascoux functions. They have also proved that these functions are indeed polynomials in $n$. The aim of this section is to find a formula for the leading coefficient and the degree of these polynomials using the recurrence relations given in \cite[Lemma 6.10, Theorem 6.11]{manivel2020complete}. The proof is very similar to the one for type C given in Section \ref{sec:C}.
	

	\begin{definition}
		For $X = (x_1,\dots,x_k)$ and $Y= (y_1,\dots,y_l)$ two sets of indeterminates, denote by $X + Y$ the set of indeterminates $\{x_i + y_j, 1 \leq i \leq k, 1 \leq j \leq l\}$. The \emph{Lascoux coefficients of type A} are the numbers $d_{I,J}$ such that the following identity holds:
		\[
		s_{(d,0,\dots,0)}(X+Y) = \sum_{\substack{|I| =k, |J| =l \\ \sum \lambda(I) + \sum \lambda(J)= d } } d_{I,J} s_{\lambda(I)}(X)s_{\lambda(J)}(Y).
		\]
		
		The \emph{Lascoux polynomials of type A} are given by
		\[
		\LP_{I,J}^A(n)=\begin{cases}
			d_{[n]\setminus I,[n]\setminus J} & \text{ if }I,J\subseteq [n]\\
			0 & \text{otherwise.}
		\end{cases}
		\]
	\end{definition}
	
	The Lascoux polynomials of type A are indeed polynomial functions in
	$n$ \cite[Theorem 6.11]{manivel2020complete}.	These polynomials satisfy the following two recurrence relations \cite[Lemma 6.10, Theorem 6.11]{manivel2020complete}. Fix $I=\{i_1<i_2<\cdots<i_r\}\subset \mathbb{N}$ and $J=\{j_1<j_2<\cdots<j_r\}\subset \mathbb{N}$.
	\begin{enumerate}
		\item  If $i_1 = 0$ and $j_1 = 0$, then
		\begin{equation}\label{recu_A_1}
			\begin{split}
				\LP_{I,J}^A(n) & = (n-r+1)
				\LP_{I \setminus \{0\}, J \setminus \{0\}}(n) -\\
				& - \sum_{\ell: i_{\ell+1} > i_\ell+1} \LP_{I \setminus \{0,i_\ell\} \cup \{i_{\ell}+1\}, J \setminus \{0\}}^A(n)
				- \sum_{\ell: j_{\ell+1} > j_\ell+1} \LP_{I \setminus \{0\}, J \setminus \{0, j_{\ell}\}\cup \{j_{\ell}+1\}}^A(n).
			\end{split}
		\end{equation}
		
		\item Otherwise, if $i_1 > 0$ or $j_1 > 0$,
		\begin{equation}\label{recu_A_2}
			\LP^A_{I,J}(n) = \sum_{I',J'} \LP^A_{I',J'}(n-1)
		\end{equation}
		where the sum is over all pairs $(I', J')$ of the form $(\{i_1 - \epsilon_1, \dots , i_r - \epsilon_r\},\{j_1 - \mu_1, \dots,j_r - \mu_r\} ) $, where $\epsilon_l,\mu_l \in \{0,1\}$.  
	\end{enumerate}

	\begin{remark}
		The degree of the Lascoux polynomials of type A satisfies the following inequality:
		\[
		\deg(\LP^A_{I,J}(n)) \leq |I| + \sum I + \sum J
		\]
	\end{remark}
	\begin{theorem}\label{thm: leadingcoefA}
		For sets $I=\{i_1,...,i_r\}$, $J=\{j_1,...,j_r\}$, 
		the degree of the Lascoux polynomials of type A is given by the following expression on $I, J$:
		\[
		\deg(\LP^A_{I,J}(n)) = |I| + \sum I + \sum J
		\]
		and the leading coefficient of $\LP^A_{I,J}$ is
		\[\frac{\prod_{k>l}(i_k-i_l)\prod_{k > l} (j_k-j_l)}{\prod_{k,l=1}^r(i_k+j_l+1) \prod_{k=1}^r (i_k)! \prod_{k=1}^r (j_k)!}.\]
	\end{theorem}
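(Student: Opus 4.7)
The strategy I propose is to mimic the double-induction used in the proof of \Cref{thm:leading-coefficient-C}, inducting first on $r = |I| = |J|$ and then on $\sum I + \sum J$, and denoting by $F(I,J)$ the rational function claimed to be the leading coefficient. The base case $r = 0$ is immediate from $\LP^A_{\emptyset,\emptyset}(n) = 1$. In the inductive step I split into Case 1 ($i_1 = j_1 = 0$) and Case 2 ($i_1 > 0$ or $j_1 > 0$), according to which of the recurrences \eqref{recu_A_1} or \eqref{recu_A_2} applies.

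For Case 2, I would rewrite \eqref{recu_A_2} as
\[
\LP^A_{I,J}(n) - \LP^A_{I,J}(n-1) = \sum_{(\epsilon,\mu) \neq (0,0)} \LP^A_{I-\epsilon,\,J-\mu}(n-1).
\]
By the inner induction each summand is a polynomial of degree $|I|+\sum I + \sum J - |\epsilon| - |\mu|$, so the right-hand side has degree at most $|I|+\sum I + \sum J - 1$ and the claimed degree of $\LP^A_{I,J}$ follows. The top-degree contributions come only from $|\epsilon|+|\mu|=1$, so equating the coefficient of $n^{|I|+\sum I+\sum J - 1}$ and dividing through by $F(I,J)$ reduces the leading-coefficient claim to
\[
|I|+\sum I + \sum J = \sum_{l=1}^r \frac{F(I-e_l,J)}{F(I,J)} + \sum_{l=1}^r \frac{F(I,J-e_l)}{F(I,J)}.
\]
A direct computation of each ratio (in which the factorials contribute a factor $i_l$, the Vandermonde contributes $\prod_{k\neq l}\tfrac{i_k-i_l+1}{i_k-i_l}$, and the cross terms contribute $\prod_{l'}\tfrac{i_l+j_{l'}+1}{i_l+j_{l'}}$) shows this is precisely the Double Sum Lemma (\Cref{lem:doublesumlemma}) with $x_k=i_k$, $y_k=j_k$.

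For Case 1, the outer induction already yields that every summand on the right-hand side of \eqref{recu_A_1} is a polynomial in $n$ of degree $|I|+\sum I + \sum J$, so $\LP^A_{I,J}$ has the claimed degree. Extracting the coefficient of $n^{|I|+\sum I+\sum J}$ and dividing by $F(I\setminus\{0\},J\setminus\{0\})$ reduces the leading-coefficient claim to
\[
\frac{F(I,J)}{F(I\setminus\{0\},J\setminus\{0\})} = 1 - \sum_{\ell \geq 2}\frac{F(I_\ell,J\setminus\{0\})}{F(I\setminus\{0\},J\setminus\{0\})} - \sum_{\ell \geq 2}\frac{F(I\setminus\{0\},J_\ell)}{F(I\setminus\{0\},J\setminus\{0\})},
\]
where $I_\ell = I \setminus \{0,i_\ell\} \cup \{i_\ell+1\}$ and $J_\ell$ is defined analogously. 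A careful accounting of how the Vandermonde factors, the cross products $\prod(i_k+j_l+1)$, and the factorials transform under these operations should yield exactly the Double Product Lemma (\Cref{lem:doubleproductlemma}) applied with $r$ replaced by $r-1$ and with substitutions $x_k = i_{k+1}$, $y_k = j_{k+1}$.

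The main obstacle will be the bookkeeping in Case 1: verifying that the left-hand side $F(I,J)/F(I\setminus\{0\},J\setminus\{0\})$ simplifies to $\prod_{k\geq 2} \tfrac{i_k j_k}{(i_k+1)(j_k+1)}$, which is exactly the left-hand side of the Double Product Lemma under the above substitution, and that each of the ratios on the right matches the corresponding summand in that lemma. As in the type C argument, degenerate cases (repeated elements after a shift, or $-1$ appearing in some $I-e_l$) are handled automatically: either a Vandermonde factor in the numerator of $F$ vanishes, or a factorial in the denominator diverges, so the formula correctly yields zero, matching the convention that $\LP^A_{I',J'} = 0$ on sets with repeats or with negative entries.
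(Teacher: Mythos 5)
Your proposal is correct and follows essentially the same route as the paper: the same double induction, with the second recurrence reduced to the Double Sum Lemma and the first recurrence reduced to the Double Product Lemma via exactly the ratio computations you describe. Your indexing for the Double Product Lemma (applying it with $r-1$ variables $x_k=i_{k+1}$, $y_k=j_{k+1}$, so that the left-hand side becomes $\prod_{k\geq 2}\tfrac{i_k j_k}{(i_k+1)(j_k+1)}$) is in fact the careful version of what the paper states more loosely.
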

	\begin{proof}
		We will proceed by induction, first on $|I|$ and then on $\sum I + \sum J$. The proof is analogous to the proof of Theorem \ref{thm:leading-coefficient-C} in Type C case. 
		We denote $\ell_{I,J}$ to be the coefficient of $n^{|I| +\sum I + \sum} J$ in $\LP_{I,J}(n)$. As $\LP_{I,J}$ have two recurrence relations given in \eqref{recu_A_1} and \eqref{recu_A_2}, we will get corresponding recurrence relations for $\ell_{I,J}$.
		
		\textbf{First recursion:\\} From the first recursion \eqref{recu_A_1} by comparing the coefficients of degree $|I| + \sum I + \sum J$ we get
		
		\[ \ell_{I,J} = \ell_{I_0, J_0} - \sum_{\substack{l > 1 \\ i_{l+1} > i_l+1}} \ell_{I_l, J_0} - \sum_{\substack{l > 1 \\ j_{l+1} > j_l+1}} \ell_{I_0, J_l} \]
		where $I_0 = I \setminus \{ 0 \}$, $J_0 = J \setminus \{ 0 \}$, $I_l = I \cup \{ i_l+1 \} \setminus \{0,i_l \}$ and $J_l = J \cup \{ j_l+1 \} \setminus \{0,j_l \}$. Now write
		\begin{align*}
			\ell_{I_0, J_0} & = \frac{\prod_{k>t>1}(i_k-i_t)\prod_{k > t > 1} (j_k-j_t)}{\prod_{k,t=2}^r(i_k+j_t+1) \prod_{k=2}^r (i_k)! \prod_{k=2}^r (j_k)!} \\
			\ell_{I_l, J_0} & = \frac{\prod_{k>t>1}(i_k-i_t)\prod_{k > t > 1} (j_k-j_t)}{\prod_{k,t=2}^r(i_k+j_t+1) \prod_{k=2}^r (i_k)! \prod_{k=2}^r (j_k)!} \cdot \frac{1}{i_l+1} \prod_{k = 2}^r\frac{i_l+j_k+1}{i_l+j_k+2}\prod_{l \neq k = 2}^r\frac{i_k-i_l-1}{i_k-i_l} \\
			\ell_{I_0, J_l} & = \frac{\prod_{k>t>1}(i_k-i_t)\prod_{k > t > 1} (j_k-j_t)}{\prod_{k,t=2}^r(i_k+j_t+1) \prod_{k=2}^r (i_k)! \prod_{k=2}^r (j_k)!} \cdot \frac{1}{j_l+1} \prod_{k = 2}^r\frac{j_l+i_k+1}{j_l+i_k+2}\prod_{l \neq k = 2}^r\frac{j_k-j_l-1}{j_k-j_l}.
		\end{align*}
		Note that
		\[ \ell_{I,J} = \frac{\prod_{k=1}^r i_k \prod_{k=1}^r j_k}{\prod_{k=1}^r (i_k+1) \prod_{k=1}^r (j_k+1)} \cdot \ell_{I_0, J_0}. \]
		Now write
		\[ \ell_{I,J} = \ell_{I_0, J_0} \left( 1 -
		\sum_{l > 1} \frac{1}{i_l+1} \prod_{k = 2}^r\frac{i_l+j_k+1}{i_l+j_k+2}\prod_{l \neq k = 2}^r\frac{i_k-i_l-1}{i_k-i_l}
		- \sum_{l > 1} \frac{1}{j_l+1} \prod_{k = 2}^r\frac{j_l+i_k+1}{j_l+i_k+2}\prod_{l \neq k = 2}^r\frac{j_k-j_l-1}{j_k-j_l}
		\right) \]
		and apply Lemma \ref{lem:doubleproductlemma} with $x_k=i_k$ and $y_k=j_k$ for every $1\leq k\leq r$.
		
		\textbf{Second recursion:\\}
		From the second recursion (\ref{recu_A_2}) by comparing the coefficients of degree $|I| + \sum I + \sum J - 1$ we get
		\[ \deg \LP_{I,J} \cdot \ell_{I,J} = \sum_{t = 1}^r \ell_{I_t,J} + \sum_{t = 1}^r \ell_{I,J_t} \]
		where $I_t = \{ i_1,\dots,i_t-1,\dots,i_r\}$ and $J_t = \{ j_1,\dots,j_t-1,\dots,j_r\}$. Now write
		\[ \ell_{I_t,J} =  \frac{\prod_{k > l}(i_k-i_l)\prod_{k > l} (j_k-j_l)}{\prod_{k,l=1}^r(i_k+j_l+1) \prod_{k=1}^r (i_k)! \prod_{k=1}^r (j_k)!} i_t \prod_{k \neq t}\frac{i_k - i_t +1}{i_k-i_t} \prod_{l=1}^r\frac{i_t+j_l+1}{i_t+j_l}, \]
		
		\[ \ell_{I,J_t} =  \frac{\prod_{k > l}(i_k-i_l)\prod_{k > l} (j_k-j_l)}{\prod_{k,l=1}^r(i_k+j_l+1) \prod_{k=1}^r (i_k)! \prod_{k=1}^r (j_k)!} j_t \prod_{k \neq t}\frac{j_k - j_t +1}{j_k-j_t}\prod_{l=1}^r\frac{i_l+j_t+1}{i_l+j_t}. \]
		Therefore,
		\[
		\begin{split}
			\deg(\LP^A_{I,J}(n)) = & \sum^r_{t=1} i_t \prod_{k \neq t}\frac{i_k - i_t +1}{i_k-i_t} \prod_{l=1}^r\frac{i_t+j_l+1}{i_t+j_l} + \sum^r_{m=1} j_m \prod_{k \neq m}\frac{j_k - j_m +1}{j_k-j_m}\prod_{l=1}^r\frac{i_l+j_m+1}{i_l+j_m},
		\end{split}
		\]
		which is equal to $|I|+\sum I+\sum J$ by Lemma~\ref{lem:doublesumlemma} with $x_k=i_k$ and $y_k=j_k$ for every $1\leq k\leq r$.
		
	\end{proof}

	\section{Type D}\label{sec:D}
	
	In this section, we turn our attention to the type D case, and proceed in a way analogous to the previous sections. The Lascoux functions for type D were first defined in \cite[Section 7]{manivel2020complete}. Here we provide a formula for their degree and their leading coefficients.
	
	\begin{definition}
		The \emph{Lascoux coefficients of type D} are the numbers $\alpha_I$ which verify the identity
		\[
		s_{(d,0,\dots,0)}(\{x_i+x_j: 1\leq i < j\leq n\}) = \sum_{\substack{|I|=n\\ \sum \lambda(I) =d }}\alpha_I s_{\lambda(I)}(x_1,\dots,x_n).
		\]
		For any increasing sequence $I=\{i_1,\dots,i_s\}$ of nonnegative integers the \emph{Lascoux quasipolynomial of type D} is
		\[
		\LP^D_I(n)=\begin{cases}
			\alpha_{[n]\setminus I} & I\subseteq [n],\\
			0 & \text{otherwise}
		\end{cases}
		\]
	\end{definition}
	
	In \cite[Theorem 7.10]{manivel2020complete} it was proved that $\LP_I^D(n)$ is a quasipolynomial of period $2$, in other words, $\LP^D_I(2n)$ and $\LP^D_I(2n-1)$ are polynomials in $n$. The proof of this result uses the following recursive relations. Fix
	$I=\{i_1<i_2<\cdots<i_r\}\subset \mathbb{N}$.
	\begin{enumerate}
		\item If $i_1=0$, then 
		
		\begin{equation}\label{recD1}
			\LP^D_I(n)=\begin{cases} \LP^D_{I\setminus\{0\}}(n) &\text{ if } n-|I| \text{ is even, }\\
				0 &\text{ if } n-|I| \text{ is odd. } 
			\end{cases}
		\end{equation}
		
		\item If $i_1>0$, then
		\begin{equation}\label{recD2}
			\LP^D_I(n)-\LP^D_I(n-1)=\sum_{\epsilon\in\{0,1\}^n\backslash0}\LP^D_{I-\epsilon}(n-1),
		\end{equation}
		where $I-\epsilon:=\{i_1-\epsilon_1,\ldots,i_r-\epsilon_r\}$ and $\LP^D_{I-\epsilon}=0$ if $|I-\epsilon|<r$. In the main result of this section we compute degree and leading coefficient of the quasipolynomials $\LP^D_I(n)$.
	\end{enumerate}
	
	
	\begin{theorem}\label{thm: leadingcoefD}
		Let $I=\{i_1<\cdots <i_r\}\subset \mathbb{N}$ be a set of nonnegative integers. Then
		\begin{itemize}
			\item[-] If $i_1>0$, $\LP^D_{I}(2n)$ and $\LP^D_{I}(2n+1)$ are polynomials in $n$ of degree $\sum I$ and leading coefficient equal to
			\[
			\frac{2^{\sum I-|I|}\prod_{k>l}(i_k-i_l)}{\prod_{k>l}(i_k+i_l) \prod_k (i_k)!}.
			\]
			\item[-] If $i_1=0$, then $\LP^D_{I}(n)=\LP^D_{I\setminus\{0\}}(n)$ if $n-|I|$ is even, and  $\LP^D_{I}(n)=0$ if $n-|I|$ is odd.  
		\end{itemize}
		
	\end{theorem}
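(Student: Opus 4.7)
The plan is to run a double induction first on $|I|$ and then on $\sum I$, paralleling the proofs of Theorems \ref{thm:leading-coefficient-C} and \ref{thm: leadingcoefA}, and to reduce the leading-coefficient identity to the Sum Lemma (Corollary \ref{cor:sumlemma}). The base case $I=\emptyset$ is immediate since $\LP^D_\emptyset \equiv 1$, and the second bullet of the theorem (the case $i_1=0$) is a direct restatement of the recurrence \eqref{recD1}, requiring no further argument.

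For the main case $i_1>0$, set $P_I(m) = \LP^D_I(2m)$ and $Q_I(m) = \LP^D_I(2m+1)$; both are polynomials in $m$ by \cite[Theorem 7.10]{manivel2020complete}. Splitting \eqref{recD2} according to the parity of $n$ yields the two identities
\[
P_I(m) - Q_I(m-1) = \sum_{\epsilon\neq 0} Q_{I-\epsilon}(m-1), \qquad Q_I(m) - P_I(m) = \sum_{\epsilon\neq 0} P_{I-\epsilon}(m),
\]
whose right-hand sides I denote $A(m-1)$ and $B(m)$. Adding them produces the telescoping relation $Q_I(m) - Q_I(m-1) = A(m-1) + B(m)$. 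By the induction hypothesis each $\LP^D_{I-\epsilon}$ has degree at most $\sum I - |\epsilon|$ in $m$, so $A$ and $B$ both have degree $\sum I - 1$, with contribution to the leading coefficient coming only from the $|\epsilon|=1$ summands. Comparing the $m^{\sum I -1}$ coefficients then forces $Q_I$ to be a polynomial of degree $\sum I$ with leading coefficient $\bar c_I = (s+s')/\sum I$, where $s, s'$ are the leading coefficients of $A$ and $B$. Substituting back into the first identity gives $P_I$ the same degree and leading coefficient.

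To evaluate $s+s'$ when $i_1 \geq 2$, the inductive hypothesis applies to each $I-e_l$ and a direct manipulation of the closed-form formula gives
\[
\frac{c_{I-e_l}}{C} = \frac{i_l}{2}\prod_{k\neq l}\frac{(i_k-i_l+1)(i_k+i_l)}{(i_k-i_l)(i_k+i_l-1)},
\]
where $C$ denotes the conjectured leading coefficient from the theorem statement. Summing over $l$ and applying Corollary \ref{cor:sumlemma} with $x_l = i_l$ yields $\sum_l c_{I-e_l} = \tfrac{1}{2}\sum I \cdot C$, so $s+s' = 2\sum_l c_{I-e_l} = \sum I \cdot C$ and therefore $\bar c_I = C$. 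Degenerate indices with $i_l = i_{l-1}+1$, for which $\LP^D_{I-e_l} = 0$, are automatic since the factor $(i_{l-1}-i_l+1)$ vanishes in the product.

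The main subtlety is the case $i_1 = 1$: then $I - e_1 = \{0, i_2, \ldots, i_r\}$ falls under \eqref{recD1}, so exactly one of $P_{I-e_1}$, $Q_{I-e_1}$ vanishes (depending on the parity of $r$) while the other equals $\LP^D_{\{i_2,\ldots,i_r\}}$ with leading coefficient $C'$. Their combined contribution to $s+s'$ is therefore $C'$. A direct verification shows that formally plugging $i_1 = 1$ into the formula above yields exactly $C'/2$, so the identity $s+s' = 2\sum_l c_{I-e_l}^{\text{formal}}$ remains valid and the Sum Lemma computation carries through as before, concluding the induction.
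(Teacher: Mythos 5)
Your proposal is correct and follows essentially the same route as the paper: the same double induction, the same parity-split and telescoping of the recurrence \eqref{recD2}, the same comparison of coefficients of degree $\sum I-1$, and the same reduction to Corollary \ref{cor:sumlemma} via the ratio $c_{I-e_l}/C=\tfrac{i_l}{2}\prod_{k\neq l}\tfrac{(i_k-i_l+1)(i_k+i_l)}{(i_k-i_l)(i_k+i_l-1)}$. Your treatment of the boundary case $i_1=1$ (exactly one of the two parity contributions survives and formally equals twice the value $C'/2$ predicted by the general formula) is in fact spelled out more explicitly than in the paper's own proof.
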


	\begin{proof}
		We fix a set $I=\{i_1<\cdots <i_r\}\subset \mathbb{N}$. For the case $i_1=0$, the statement follows from induction hypothesis. 
		
		In the case $i_1=0$, the statement is nothing but the recurrence relation \eqref{recD1}. Now we consider the case $i_1>0$ and proceed by induction.
		
		Assume that the statement holds for all $I'$ with $|I'|<|I|$ or $|I'|=|I|$ and $\sum I'<\sum I$. By applying (\ref{recD2}) we obtain 
		\begin{equation}\label{eq: type D}
			\LP^D_I(2n)-\LP^D_I(2(n-1))=\sum_{\epsilon\in\{0,1\}^n\backslash0}\LP^D_{I-\epsilon}(2n-1)+\sum_{\epsilon\in\{0,1\}^n\backslash0}\LP^D_{I-\epsilon}(2n-2).
		\end{equation}
		
		On the right-hand side we get sum of polynomials of degree at most $\sum I-1$. Moreover, the polynomials with this degree are only those with $\epsilon_1+\dots+\epsilon_r=1$. Let $\ell^D_{I}$ be the coefficient of $n^{\sum I}$ in $\LP_I^D(2n)$, and let $e_i\in \{0,1\}^n$ be the $i$-th vector of the canonical basis of $\mathbb{Z}^n$. Comparing the coefficients of $n^{\sum I - 1}$ of both sides of \eqref{eq: type D} and using the induction hypothesis on $\ell^D_{I-e_j}$ we obtain
		\begin{align*}
			\deg(\LP^D_I(2n))\ell^D_I=&2\sum_{j=1}^r \ell^D_{I-e_j}\\
			=&2\sum_{j=1}^{r}\left(\frac{\prod_{k>l}(i_k-i_l)2^{\sum I-|I|-1}}{\prod_{k>l}(i_k+i_l) \prod_k (i_k)!}\cdot i_j \prod_{k \neq j} \frac{(i_k+i_j)(i_k-i_j+1)}{(i_k-i_j)(i_k+i_j-1)}\right)\\
			=&\frac{\prod_{k>l}(i_k-i_l)2^{\sum I-|I|}}{\prod_{k>l}(i_k+i_l) \prod_k (i_k)!}\left(\sum_{j=1}^{r}i_j \prod_{k \neq j} \frac{(i_k+i_j)(i_k-i_j+1)}{(i_k-i_j)(i_k+i_j-1)}\right).
		\end{align*}
		Notice that if $i_1=1$, one of $\LP_{\{0,i_2,\ldots,i_r\}}(2n-1)$ and $\LP_{\{0,i_2,\ldots,i_r\}}(2n-2)$ is zero and the other is equal to $\LP^D_{\{i_2,\ldots,i_r\}}(2n-1)$ (or $\LP^D_{\{i_2,\ldots,i_r\}}(2n-2)$) whose leading coefficient is also the same as the term included in the expression above. By \Cref{cor:sumlemma}, the last expression is equal to
		\[
		\frac{\prod_{k>l}(i_k-i_l)2^{\sum I-|I|}}{\prod_{k>l}(i_k+i_l) \prod_k (i_k)!}\left(\sum_{j=1}^{r}i_j \right).
		\]
		As this quantity is not zero for any set $I$, we have that $\ell^D_{I}\neq 0$, and hence $\deg(\LP^D_I(2n))=\sum I$. It follows that $n\mapsto \LP^D_{I}(2n)$ is a polynomial function in $n$ with degree $\sum I$ and leading coefficient 
		\[\frac{\prod_{k>l}(i_k-i_l)2^{\sum I-|I|}}{\prod_{k>l}(i_k+i_l) \prod_k (i_k)!}.\]
		The case of the polynomial function for $n\mapsto \LP^D_{I}(2n-1)$ is completely analog. This concludes the proof.
		
	\end{proof}
	\section{The algebraic degrees $\delta(m,n,n-1)$, $\delta_A(m,n,n-1)$ and $\delta_D(m,n,n-1)$}\label{sec:delta}
	
	One of the applications of the results in \cite{manivel2020complete} establishes polynomiality of a sequence of positive integers attached to \emph{semidefinite programming}. This is the problem of optimizing a linear function over the cone of positive semidefinite matrices. In \cite{NRS} the authors study the complexity of computing an exact solution for this optimization problem, and they quantify this complexity via the degree of a projective variety. Similar degrees can be defined for optimization problems related to the space of general and skew-symmetric matrices. We recall that for a variety $X\subseteq \mathbb{P}^n$ the \emph{projective dual} $X^*\subseteq (\mathbb{P}^n)^*$ is the closure of the set of hyperplanes tangent to $X$ at a smooth point. In the next definition we follow the notation in \cite{manivel2020complete}.
	\begin{definition}[{\cite[Definition 1.4, 6.2 and 7.2]{manivel2020complete}}]
		We define the following three numbers:	
		\begin{itemize}
			\item[Type C] Let $SD^{r,n}_m\subseteq \mathbb{P}(S^2 \mathbb{C}^n)$ be the intersection of the variety of $n\times n$ symmetric matrices of rank at most $r$ with a general linear space of projective dimension $m$. We define $\delta(m,n,r)$ as the degree of $(SD^{r,n}_m)^*$ if it is a hypersurface, and zero otherwise.
			\item[Type A] Let $D^{r,n}_m\subseteq \mathbb{P}(\mathbb{C}^n\otimes \mathbb{C}^n)$ be the intersection of the variety of $n\times n$ matrices of rank at most $r$ with a general linear space of projective dimension $m$. We define $\delta_A(m,n,r)$ as the degree of $(D^{r,n}_m)^*$ if it is a hypersurface, and zero otherwise.
			\item[Type D] Let $AD^{2r,2n}_m\subseteq \mathbb{P}(\bigwedge^2\mathbb{C}^n)$ be the intersection of the variety of $2n\times 2n$ skew-symmetric matrices of rank at most $2r$ with a general linear space of projective dimension $m$. We define $\delta_D(m,n,r)$ as the degree of $(AD^{2r,2n}_m)^*$ if it is a hypersurface, and zero otherwise.
		\end{itemize}
	\end{definition}
	Using Lascoux polynomials, in \cite{manivel2020complete} it is proved that $\delta(m,n,n-s)$, $\delta_A(m,n,n-s)$ and $\delta_D(m,n,n-s)$ are polynomials in $n$. We determine their degrees using our results on the leading coefficient of Lascoux polynomials. Moreover, we compute the leading coefficients of $\delta(m,n,n-s)$, $\delta_A(m,n,n-s)$ and $\delta_D(m,n,n-s)$ in the case \alessio{when} $s=1$ combining the formulas obtained in the previous sections together with the results in \cite{manivel2020complete}. These results should be regarded as asymptotic degrees.
	
	Here and in the rest of the section we denote with $LC(f)$ the leading coefficient of a univariate polynomial $f$.
	\begin{theorem}[Type C]\label{thm: semidefinite type C} For every $s>0$ and $m\geq \binom{s+1}{2}$, the polynomial $\delta(m,n,n-s)$ has degree $m$. Moreover
		\[
		LC(\delta(m,n,n-1)) = \frac{2^{m-1}}{m!},
		\]			
		for every $m>0$.
	\end{theorem}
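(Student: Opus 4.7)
The plan is to combine the explicit expression of $\delta(m,n,n-s)$ as a finite linear combination of Lascoux polynomials (due to \cite{BR}, restated and exploited in \cite{manivel2020complete}) with the closed form for the leading coefficient of $\LP_I$ from \Cref{thm:leading-coefficient-C}. Concretely, the cited formula has the shape
\[
\delta(m,n,n-s) \;=\; \sum_{I \in \mathcal{I}(m,s)} c_I \, \LP_I(n),
\]
where $\mathcal{I}(m,s)$ is an explicit finite family of subsets $I \subseteq \mathbb{N}$ of cardinality $s$ and the $c_I$ are explicit nonnegative coefficients arising from intersection numbers on the variety of complete quadrics. Since \Cref{thm:leading-coefficient-C} tells us that $\deg\LP_I = |I|+\sum I$ with a strictly positive leading coefficient, the degree of $\delta(m,n,n-s)$ is controlled by the maximum of $|I|+\sum I$ over $\mathcal{I}(m,s)$.

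For the degree claim I would proceed as follows. On subsets $I$ of cardinality $s$, the quantity $|I|+\sum I$ has minimum $s+\binom{s}{2}=\binom{s+1}{2}$, attained at $I=\{0,1,\dots,s-1\}$; this is precisely why the hypothesis $m\geq \binom{s+1}{2}$ appears, ensuring $\mathcal{I}(m,s)\neq\emptyset$. On the other hand, the index set is cut out so that $|I|+\sum I$ ranges up to $m$, and this maximum is achieved for at least one $I$. Using the positivity of both the $c_I$ and the leading coefficients $LC(\LP_I)$ supplied by \Cref{thm:leading-coefficient-C}, the top-degree contributions cannot cancel, and one concludes $\deg\delta(m,n,n-s)=m$.

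For the leading coefficient in the case $s=1$, the index set collapses to singletons $I=\{i\}$ and only $I=\{m-1\}$ reaches the top degree $m$. \Cref{thm:leading-coefficient-C} specializes, with $r=1$ and $i_1=m-1$, to $LC(\LP_{\{m-1\}})=\frac{1}{m!}$ (the Vandermonde products are empty). Reading off the coefficient $c_{\{m-1\}}$ in the type~C formula—which unwinds to $2^{m-1}$ by a direct computation with the Segre-class expression of the Lascoux coefficients on the Grassmannian—yields
\[
LC\bigl(\delta(m,n,n-1)\bigr) \;=\; c_{\{m-1\}} \cdot LC(\LP_{\{m-1\}}) \;=\; \frac{2^{m-1}}{m!}.
\]

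The main obstacle will be the bookkeeping of the coefficients $c_I$ in the formula from \cite{manivel2020credit}{manivel2020complete}: verifying that $c_{\{m-1\}}=2^{m-1}$ in type~C for $s=1$ is a concrete but not fully mechanical calculation, and the degree claim for general $s$ requires ruling out accidental cancellation among the possibly many $I$'s with $|I|+\sum I=m$. Positivity of the enumeratively defined $c_I$ and of the $LC(\LP_I)$ from \Cref{thm:leading-coefficient-C} is the decisive input that removes this concern.
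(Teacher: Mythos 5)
Your proposal is correct and follows essentially the same route as the paper: the formula of Brasselet--Ranestad expressing $\delta(m,n,n-s)$ as $\sum_{|I|=s,\ \sum I=m-s}\psi_I\,\LP_I(n)$, positivity of the $\psi_I$ and of the leading coefficients from \Cref{thm:leading-coefficient-C} to rule out cancellation (in fact every term in that sum has degree exactly $|I|+\sum I=m$, so nothing "ranges up to" $m$), and the non-emptiness condition $m\geq\binom{s+1}{2}$. The one ingredient you flag as an obstacle, $c_{\{m-1\}}=\psi_{\{m-1\}}=2^{m-1}$, is immediate from the elementary definition of $\psi_I$ as a sum of $1\times 1$ minors of the Pascal matrix, namely $\sum_{j}\binom{m-1}{j}=2^{m-1}$ (the paper simply cites this from the literature).
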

	\begin{proof}
		By \cite[Theorem 1.1]{BR} we have that
		\[
		\delta(m,n,n-s) = \sum_{\substack{I\subseteq [n]\\ |I| = s\\ \sum I = m-s} } \psi_I \LP_I(n),
		\]
		where $\psi_{I}$ are the Lascoux coefficients as in \cite[Definition 2.5]{manivel2020complete}. Observe that the last sum is not empty if and only if there exists $I\subseteq [n]$ with $\sum I \geq \binom{|I|}{2}$. This happens if and only if $m\geq \binom{s+1}{2}$. By \Cref{thm:leading-coefficient-C} for every fixed $m,s$ satisfying this inequality, $\delta(m,n,n-s)$ is a positive finite linear combination of polynomials of degree $|I| +\sum I = m$, which proves the first claim.
		For the second statement we have $\delta(m,n,n-1) =\psi_{\{m-1\}} \LP_{\{m-1\}}(n)$ and 
		$LC(\delta(m,n,n-1)) =\psi_{\{m-1\}} LC(\LP_{\{m-1\}}(n))$.
		In \cite[Lemma 2.7]{manivel2020complete} it is proved that $\psi_{\{m-1\}}=2^{m-1}$, and by \Cref{thm:leading-coefficient-C} we have that $LC(\LP_{\{m-1\}}(n))=\frac{1}{m!}$. This concludes the proof.
	\end{proof}
	Hence, for large values of $n$ we have that $\delta(m,n,n-1)\sim \frac{2^{m-1}}{m!} n^{m}$.
	
	\begin{theorem}[Type A]\label{thm: semidefinite type A} For every $s>0$ and $m\geq s^2$ the polynomial $\delta_A(m,n,n-s)$ has degree $m$. Moreover,
		\[
		LC(\delta_A(m,n,n-1)) = \frac{1}{m!}\binom{2(m-1)}{m-1}.
		\]			
	\end{theorem}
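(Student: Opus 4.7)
The plan is to mirror the proof of Theorem~\ref{thm: semidefinite type C}, substituting the Type A ingredients at every step. The starting point is an expansion (analogous to the one cited from \cite{BR} in the Type C case, and established in Type A in \cite{manivel2020complete}) of the form
\[
\delta_A(m,n,n-s) \;=\; \sum_{\substack{|I|=|J|=s \\ \sum I + \sum J \;=\; m-s}} d_{I,J}\,\LP^A_{I,J}(n),
\]
where $d_{I,J}$ denotes the Type A Lascoux coefficient. Assuming this expansion, Theorem~\ref{thm: leadingcoefA} tells us that each summand is a polynomial of degree $|I|+\sum I+\sum J = s+(m-s)=m$ with strictly positive leading coefficient, so no cancellation of the top-degree term can occur.

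The degree statement then reduces to checking when the indexing set is nonempty. Since $\sum I\geq\binom{s}{2}$ and $\sum J\geq\binom{s}{2}$ whenever $|I|=|J|=s$, the condition $\sum I+\sum J=m-s$ forces $m-s\geq 2\binom{s}{2}=s(s-1)$, i.e. $m\geq s^2$; conversely for any such $m$ a valid pair is obtained by taking $I=\{0,1,\dots,s-1\}$ and choosing $J$ to absorb the remaining mass. Hence $\deg\delta_A(m,n,n-s)=m$ for $m\geq s^2$.

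For the leading coefficient when $s=1$, the sum runs over the $m$ pairs $(\{a\},\{m-1-a\})$, $a=0,\dots,m-1$. The value $d_{\{a\},\{m-1-a\}}$ can be read off directly from the defining identity: specializing to $X=\{x\}$, $Y=\{y\}$, $d=m-1$ gives $(x+y)^{m-1}=\sum_{a+b=m-1} d_{\{a\},\{b\}}\,x^a y^b$, and therefore $d_{\{a\},\{m-1-a\}}=\binom{m-1}{a}$. Combined with the value $LC(\LP^A_{\{a\},\{m-1-a\}})=\tfrac{1}{m\cdot a!(m-1-a)!}$ supplied by Theorem~\ref{thm: leadingcoefA}, this yields
\[
LC(\delta_A(m,n,n-1)) \;=\; \sum_{a=0}^{m-1}\binom{m-1}{a}\cdot\frac{1}{m\cdot a!\,(m-1-a)!} \;=\; \frac{1}{m!}\sum_{a=0}^{m-1}\binom{m-1}{a}^{2} \;=\; \frac{1}{m!}\binom{2(m-1)}{m-1},
\]
where the last equality is the Vandermonde--Chu identity applied to $\sum_a \binom{m-1}{a}\binom{m-1}{m-1-a}$.

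The principal obstacle I anticipate is verifying (or precisely locating in the literature) the Type A expansion formula for $\delta_A$; once it is in hand, the remaining steps are a clean parallel of the Type C proof, with the only nontrivial simplification being the Vandermonde identity that collapses the sum of squares of binomial coefficients.
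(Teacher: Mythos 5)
Your proposal is correct and follows essentially the same route as the paper: the expansion of $\delta_A(m,n,n-s)$ in terms of $d_{I,J}\,\LP^A_{I,J}(n)$ from \cite{manivel2020complete}, positivity of the leading coefficients from Theorem~\ref{thm: leadingcoefA} to rule out cancellation, and the Vandermonde--Chu identity to collapse $\sum_a \binom{m-1}{a}^2$. The only (harmless) difference is that you derive $d_{\{a\},\{b\}}=\binom{a+b}{a}$ directly from the defining identity with $X=\{x\}$, $Y=\{y\}$, whereas the paper cites this as \cite[Proposition 6.9]{manivel2020complete}.
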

	\begin{proof}
		By \cite[Theorem 6.8]{manivel2020complete} we have that
		\[
		\delta_A(m,n,n-s)=\sum_{\substack{I,J\subset [n]\\ |I| = |J| = s\\ \sum I + \sum J=m-s}} d_{I,J} \LP^A_{I,J}(n),
		\]
		where $d_{I,J}$ are the type A Lascoux coefficients as defined in \cite[Definition 6.7]{manivel2020complete}. The last sum is not empty if and only if the condition $\sum I+\sum J\geq \binom{|I|}{2}+\binom{|J|}{2}$ is satisfied by some $I,J\subseteq [n]$. This is equivalent to $m-s\geq 2\binom{s}{2}$, that is $m\geq s^2$. Hence by Theorem~\ref{thm: leadingcoefA} for fixed $m,s$ satisfying this inequality,  $\delta_A(m,n,n-s)$ is a positive finite combination of polynomials of degree $|I| +\sum I + \sum J = m$.
		For the second statement we specialize to $r=n-1$ and obtain $\delta_A(m,n,n-1)=\sum_{i=0}^{m-1} d_{\{i\},\{m-1-i\}} \LP^A_{\{i\},\{m-1-i\}}(n)$. As by Theorem~\ref{thm: leadingcoefA} all the $m$ polynomials on the right-hand side have the same degree we have that 
		\[
		LC(\delta_A(m,n,n-1))=\sum_{i=0}^{m-1} d_{\{i\},\{m-1-i\}} LC(\LP^A_{\{i\},\{m-1-i\}}(n)).
		\]
		By \cite[Proposition 6.9]{manivel2020complete} we have that $d_{\{i\},\{m-1-i\}}=\binom{m-1}{i}$, and by Theorem~\ref{thm: leadingcoefA} we have that $LC(\LP^A_{\{i\},\{m-1-i\}}(n))=\frac{1}{m\cdot i!(m-1-i)!}$. Combining the two results we obtain
		\begin{align*}
			LC(\delta_A(m,n,n-1))&=\sum_{i=0}^{m-1}\binom{m-1}{i}\frac{1}{m\cdot i!(m-1-i)!}=\frac{1}{m}\sum_{i=0}^{m-1}\binom{m-1}{i}^2=\frac{1}{m!}\binom{2(m-1)}{m-1}.	
		\end{align*}
	\end{proof}
	\Cref{thm: semidefinite type A} implies that for large values of $n$, $\delta_A(m,n,n-1)\sim \frac{1}{m!}\binom{2(m-1)}{m-1} n^{m}$.\\
	
	Finally, we present analog results for the type D case. 
	
	\begin{theorem}[Type D]\label{thm: semidefinite type D} For every $s>0$ and $m\geq \binom{2s}{2}$, the polynomial $\delta_D(m,n,n-s)$ has degree $m$. Moreover, 
		\[
		LC(\delta_D(m,n,n-1)) =\frac{2^{m-2}}{m!}\left(\frac{1}{m}\binom{2(m-1)}{m-1}+1\right).
		\]			
	\end{theorem}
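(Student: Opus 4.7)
The plan is to follow the templates of Theorems~\ref{thm: semidefinite type C} and~\ref{thm: semidefinite type A}. The starting point is the identity from \cite{manivel2020complete} that writes $\delta_D(m,n,n-s)$ as a nonnegative combination
\[
\delta_D(m,n,n-s) \;=\; \sum_{\substack{I \subseteq [n] \\ |I| = 2s,\ \sum I = m}} \alpha_I\,\LP^D_I(n),
\]
where the $\alpha_I$ are the type~D Lascoux coefficients. The indexing set is non-empty precisely when $2s$ distinct nonnegative integers can sum to $m$; the minimum such sum being $0+1+\cdots+(2s-1) = \binom{2s}{2}$, this reads $m \geq \binom{2s}{2}$. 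For each such $I$, Theorem~\ref{thm: leadingcoefD} gives $\deg \LP^D_I = \sum I = m$ with positive leading coefficient, so no cancellation among the top-degree terms can occur, and $\deg \delta_D(m,n,n-s) = m$.

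For the leading coefficient in the case $s=1$, the indexing set reduces to pairs $I = \{i,\,m-i\}$ with $0 \leq i < m/2$. I would split the sum into the contribution of $I=\{0,m\}$ and that of the pairs with $i \geq 1$. For the former, Theorem~\ref{thm: leadingcoefD} reduces $\LP^D_{\{0,m\}}$ to $\LP^D_{\{m\}}$ (in the appropriate residue class), whose leading coefficient equals $2^{m-1}/m!$. For the latter, the first bullet of Theorem~\ref{thm: leadingcoefD} yields
\[
LC\bigl(\LP^D_{\{i,\,m-i\}}\bigr) \;=\; \frac{2^{m-2}(m-2i)}{m\cdot i!\,(m-i)!}.
\]
Plugging in the explicit values of $\alpha_I$ from \cite{manivel2020complete} (expected, in parallel with the type~A formula used in Theorem~\ref{thm: semidefinite type A}, to be binomial expressions such as $\binom{m}{i}$) and invoking a Chu--Vandermonde-type identity analogous to $\sum_{i=0}^{m-1}\binom{m-1}{i}^2 = \binom{2(m-1)}{m-1}$, the $i\geq 1$ terms should collapse to $\frac{2^{m-2}}{m!}\cdot\frac{1}{m}\binom{2(m-1)}{m-1}$, while the $I=\{0,m\}$ term contributes the ``$+1$'' summand inside the parentheses.

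The hard part will be the bookkeeping in the binomial sum: the factor $(m-2i)$ in $LC(\LP^D_{\{i,m-i\}})$ is antisymmetric under $i \leftrightarrow m-i$, so I expect to double the sum and divide by two in order to exploit the full range $0 \leq i \leq m$ of a known binomial identity, and then to reconcile the resulting prefactor $2^{m-2}$ with the ``mismatched'' factor $2^{m-1}$ coming from the $i=0$ term. A secondary subtlety is that the individual $\LP^D_I$ are only quasipolynomials of period~$2$, whereas $\delta_D(m,n,n-s)$ is an honest polynomial; the compatibility of the even and odd parts is already established in \cite{manivel2020complete} and may be invoked as a black box.
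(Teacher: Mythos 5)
Your proposal follows essentially the same route as the paper: the same decomposition of $\delta_D(m,n,n-s)$ from \cite{manivel2020complete}, the same non-emptiness and positivity argument giving degree $m$, and for $s=1$ the same split into the $I=\{0,m\}$ term and the terms with $i\geq 1$ followed by symmetrization of the binomial sum. The only detail you leave open is the value $\alpha_{\{i,m-i\}}=\binom{m-1}{i}-\binom{m-1}{i-1}=\frac{m-2i}{m}\binom{m}{i}$, whose extra factor of $(m-2i)$ turns your antisymmetric summand into the symmetric $(m-2i)^2\binom{m}{i}^2$, after which the doubling trick and the identity $\sum_{i=0}^{m}(m-2i)^2\binom{m}{i}^2=2m\binom{2(m-1)}{m-1}$ finish the computation exactly as you anticipate.
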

	\begin{proof}
		
		In \cite[Theorem 7.8]{manivel2020complete} it is proved that
		\[
		\delta_D(m,n,r)=\sum_{\substack{I\subset [2n]\\ |I| =2n-2r\\ \sum I=m}}\alpha_I \LP^D_{I}(2n).
		\]
		The sum on the right-hand side is not empty if and only if $\sum I\geq \binom{|I|}{2}$, that is $m\geq \binom{2s}{2}$. When this inequality holds,
		by \Cref{thm: leadingcoefD} we have that $\delta_D(m,n,n-s)$ is a positive finite combination of polynomials of degree $\sum I = m$.
		Moreover, $\delta_D(m,n,n-1)=\sum_{i=0}^{\lfloor\frac{m-1}{2}\rfloor} \alpha_{\{i,m-i\}} \LP^D_{\{i,m-i\}}(2n)$ and we obtain 
		\[
		LC(\delta_D(m,n,n-1))=\sum_{i=0}^{\lfloor\frac{m-1}{2}\rfloor} \alpha_{\{i,m-i\}} LC(\LP^D_{\{i,m-i\}}(2n)).
		\]
		By \cite[A.16.5]{lascoux1989giambelli} we have that $\alpha_{\{i,j\}}=\binom{i+j-1}{i}-\binom{i+j-1}{i-1}$, so in particular $\alpha_{i,m-i}=\binom{m-1}{i}-\binom{m-1}{i-1}$. Using \Cref{thm: leadingcoefD} we conclude that 
		\[
		LC(\LP^D_{\{i,m-i\}}(2n))=\begin{cases}
			\frac{m-2i}{4m\cdot i!(m-i)!} & i>0\\
			\frac{m-2i}{2m\cdot i!(m-i)!} & i=0
		\end{cases}.
		\] 
		As a polynomial in $n$, the degree of $\LP^D_{\{i,m-i\}}(2n)$ is equal to $m$ and its leading coefficient is then $2^mLC(\LP^D_{\{i,m-i\}}(2n))$. We obtain
		\begin{align*}
			LC(\delta_D(m,n,n-1))&=\frac{2^{m-1}}{ m!}+\sum_{i=1}^{\lfloor\frac{m-1}{2}\rfloor}\left(\binom{m-1}{i}-\binom{m-1}{i-1} \right)\frac{2^{m-2}(m-2i)}{m\cdot i!(m-i)!}\\
			&=\frac{2^{m-1}}{ m!}+\frac{2^{m-2}}{m^2\cdot m!}\sum_{i=1}^{\lfloor\frac{m-1}{2}\rfloor}(m-2i)^2\binom{m}{i}^2\\
			&=\frac{2^{m-1}}{ m!}+\frac{2^{m-2}}{m^2\cdot m!}\left( m\binom{2(m-1)}{m-1}-m^2\right)\\
			&= \frac{2^{m-2}}{m!}\left(\frac{1}{m}\binom{2(m-1)}{m-1}+1\right).
		\end{align*}
		
	\end{proof}	
	Hence, for large values of $n$ we have that $\delta_D(m,n,n-1)\sim \frac{2^{m-2}}{m!}\left(\frac{1}{m}\binom{2(m-1)}{m-1}+1\right) n^{m}$.\\
	It is of course possible to follow the same idea to compute the leading coefficients of $\delta(m,n,n-s)$, $\delta_A(m,n,n-s)$ and $\delta_D(m,n,n-s)$ for higher values of $s$, even though the calculation becomes significantly more involved. We conclude this article with a natural question.
	\begin{problem}
		Find formulas in $m$ and $s$ for the leading coefficients of $\delta(m,n,n-s)$, $\delta_A(m,n,n-s)$ and $\delta_D(m,n,n-s)$.
	\end{problem}

	\bibliographystyle{alpha} 
	\bibliography{references.bib}
	
\end{document}